\def\thefootnote{\fnsymbol{footnote}}
\newtheorem{thm}{Theorem}[section]
{}
\newtheorem{prop}[thm]{Proposition}
\newtheorem{lemma}[thm]{Lemma}
\newtheorem{defin}[thm]{Definition}
\newenvironment{definition}{\begin{defin} \rm}{\end{defin}}
\newcommand{\hookdownarrow}{\mathrel{\rotatebox[origin=c]{-90}{$\hookrightarrow$}}}
\newcommand{\arcd}{\ar@{-}@/_/} 
\newcommand{\arcu}{\ar@{-}@/^/} 
\newcommand{\arcU}{\ar@{-}@/^10pt/} 
\newcommand{\tra}{\ar@{-}} 
\newcommand{\Hom}{\text{\textnormal{Hom}}}
  \def\De{\Delta}
 \def\de{\delta}
  \def\leq{\leqslant}  \def\geq{\geqslant}
\def\Hom{\mbox{\rm Hom}}  
 \def\res{\mbox{\rm Res}}
  \def\ind{\mbox{\rm Ind}}
\def\dim{\mbox{\rm dim}\,}
\def\ggp#1#2{\left[\kern-3.2pt\left[{#1\atop #2}\right]\kern-3.2pt\right]}
\def\la{\lambda}
\def\de{\delta}
\def\pr{\prime}
\def\o{\otimes}
\tikzset{>=stealth}
\begin{document}

\title[Grothendieck ring of walled Brauer algebras]
{On the Grothendieck ring of the sequence of walled Brauer algebras}

\thanks {}

\author{Pei Wang}

\address{Wang: Teachers’ College, Beijing Union University, Beijing 100101, P. R. China}

\email{wangpei19@163.com}

\author{Yanbo Li$^{\dag}$}

\address{Li: School of Mathematics and Statistics, Northeastern
University at Qinhuangdao, Qinhuangdao, 066004, P.R. China}

\email{liyanbo707@163.com}

\maketitle

\begin{abstract}
In this paper, we provide a diagrammatic approach to study the branching rules for cell modules
on a sequence of walled Brauer  algebras. This approach also allows us to calculate the structure constants
of multiplication over the Grothendieck ring of the sequence.
\end{abstract}

\renewcommand{\thefootnote}{\alph{footnote}}
\setcounter{footnote}{-1} \footnote{$^{\dag}$Corresponding author: liyanbo707@163.com}
\renewcommand{\thefootnote}{\alph{footnote}}
\setcounter{footnote}{-1} \footnote{2010 Mathematics Subject
Classification: 16D90; 16G10; 16E20.}
\renewcommand{\thefootnote}{\alph{footnote}}
\setcounter{footnote}{-1} \footnote{Keywords: walled Brauer algebra;
cellular algebra; cell module; Grothendieck ring.}

\section{Introduction}

The walled Brauer algebra
$B_{r,s}(\de)$ is a subalgebra of the Brauer algebra $B_{r+s}(\de)$,
which was introduced
independently by Koike \cite{Koi} and Turaev \cite{Tur}.
As  a generalization of the classical Schur-Weyl duality, the centralizer
of the natural action of $\mathrm{GL}_n(\mathbb{C})$ on a mixed tensor space
$V^{\o r}\o W^{\o s}$, with $V=\mathbb{C}^n$ and $W=V^*$, was characterized as
the walled Brauer algebra $B_{r,s}(n)$, for $n \geq r + s$.
Walled Brauer algebras have been extensively studied, including the cellularity,
semi-simplicity, decomposition numbers, Jucys-Murphy elements,
block theory, Kazhdan-Lusztig theory and so on. We refer the reader to
\cite{And, BS, cox1, cox2, JK, SS}  for details.

An important feature of a diagram algebra is that
a low-dimensional algebra can
be injected naturally as a subalgebra of a high-dimensional algebra.
This enables one to consider diagram algebras in the tower framework.
Wang \cite{WP} studied a tower of Temperley-Lieb algebras $\mathsf{TL}_{n}$
and introduced the concept of walled modules  to study the branching rules
for cell modules of algebras $\mathsf{TL}_{m} ~\otimes
\mathsf{TL}_{n}$ and $\mathsf{TL}_{m+n}$.  Using this concept, the
structure constants of  multiplication over the Grothendieck group can be calculated.
The present study aimed to generalize Wang's result \cite{WP} to walled Brauer algebras.

Note that the tensor product of two Temperley-Lieb diagrams $A$ and $B$ is defined as the juxtaposition,
that is, diagram $A$ is to the left of diagram $B$.
However, this definition cannot be generalized to walled Brauer diagrams directly
since the juxtaposition is no longer a walled Brauer diagram. To overcome this obstacle,
we introduce the so-called twisted tensor product of two walled Brauer diagrams.
Interestingly, the twisted tensor product of walled Brauer algebras is actually isomorphic
to their ordinary tensor product. As a result, we obtain
the sequence of walled Brauer algebras  with  embedding
$$\rho_{r,s,n,m}: B_{r,s}(\delta) \o_k B_{n,m}(\delta)\hookrightarrow B_{r+n,s+m}(\delta).$$
Using the tensor product, we can investigate the restriction and induction functors
between the module categories of algebras $B_{r,s}(\delta) \o_k B_{n,m}(\delta) $ and
$B_{r+n,s+m}(\delta)$. In this process, the double-walled module,
which can be viewed as a generalization of walled modules, is of primary importance.
With these preparations, the structure constants of
the Grothendieck ring for a sequence of walled Brauer algebras is calculated finally.

The paper is organized as follows. In Section 2, we provide a quick review
of the cellular structure of a walled Brauer algebra.
In Section 3, we introduce the so-called twisted tensor product of
the walled Brauer algebra, which is proved to be isomorphic to the tensor product algebra.
In order to characterize the restriction of cell modules, we study in Section 4 the double walled modules,
and based on the main result of this section, we calculate the structure constants of
the Grothendieck ring for a sequence of walled Brauer algebras in Section 5.

\section*{Acknowledgment}
Part of this work was done when Wang visited Northeastern University at Qinhuangdao in 2019.
He takes this opportunity to express his sincere thanks to the School of Mathematics and Statistics for the hospitality during his
visit.

Wang is supported by NSFC (No. 11901033) and the General Project of Science and Technology Plan of Beijing Municipal Education Commission (No. KM202011417012). Li is supported by the Natural Science Foundation of Hebei
Province, China (A2017501003) and NSFC (No.11871107).

\section[Definition and Structure]{Preliminaries}\label{sec: definition}
In this section, we shall provide a quick review of the definition
and cellular structure of walled Brauer algebras.

\subsection{Definition of walled Brauer algebra}
Let $k$ be a field and $\delta\in k$. For $n\in \mathbb{N}$,
recall that \emph{Brauer algebra} $B_n(\delta)$ has a basis that includes all partitions of
$\{1,\ldots,n,\bar{1},\ldots,\bar{n}\}$ with each part in a partition just being a pair.
Such a partition can be represented by a so-called \textit{$n$-diagram} as follows.
The diagram consists of two rows of $n$ dots, with the top row labeled by $1,...,n$ in order from left to right,
and similarly, with the bottom row labeled by $\bar{1},...,\bar{n}$.
Then the dots belonging to the same part are joined with a smooth curve, i.e., an edge.
An edge is called a \emph{propagating edge} if it connects two dots in different rows;
else, it is called an {\em arc}.

The multiplication $A\cdot B$ of two $n$-diagrams $A$ and $B$ is given by concatenation, i.e., by
stacking $A$ on the top of $B$, identifying the bottom dots of $A$ with
the top dots of $B$, and following the lines from the top to bottom or within one row.
Note that the result diagram may contain some closed curses. The number of these closed curses is denoted by $t$. Then
the multiplication $A\cdot B$ is
defined to be $\delta^tC$, where $C$ is the diagram obtained by removing the aforementioned closed curses.

For two natural numbers $r$ and $s$, the \emph{walled Brauer algebra}
$B_{r,s}(\delta)$ (sometimes denoted by $B_{r,s}$ briefly) is a subalgebra of $B_{r+s}(\delta)$.
Its a basis consists of the so-called walled Brauer diagrams
satisfying certain conditions. Given an $r+s$ Brauer diagram, a vertical line (wall) is added to separate
the first $r$ top and bottom dots from the remainder. Then, an $(r, s)$-walled Brauer diagram requires that
no propagating edge crosses the wall and that every arc does cross the wall.
Let us illustrate the product of two walled Brauer diagrams when $n = 8$, $r=3$, and $s=5$ (see Figure 1).

\begin{figure}[H]
	\begin{equation*}
	D_1\cdot D_2=
	\begin{array}{c}
	\begin{tikzpicture}
	\node at (1.3,1.8) {$D_1$};
	\node at (0,1.4) {$1$};
	\node at (0.8,1.4) {$3$};
	\node at (0.42,1.4) {$2$};
	\node at (1.6,1.4) {$4$};
	\node at (2.4,1.4) {$\cdots$};
	\node at (3.2,1.4) {$8$};
	\node at (0,0) {\tiny\textbullet};
	\node at (0,0.9) {\tiny\textbullet};
	\node at (0.42,0) {\tiny\textbullet};
	\node at (0.42,0.9) {\tiny\textbullet};
	\node at (0.8,0) {\tiny\textbullet};
	\node at (0.8,0.9) {\tiny\textbullet};
	\node at (1.6,0) {\tiny\textbullet};
	\node at (1.6,0.9) {\tiny\textbullet};
	\draw (1.2,1.3) --(1.2,-0.3);
	\node at (2.0,0) {\tiny\textbullet};	
	\node at (2.0,0.9) {\tiny\textbullet};
	\node at (2.4,0) {\tiny\textbullet};	
	\node at (2.4,0.9) {\tiny\textbullet};
	\node at (2.8,0) {\tiny\textbullet};	
	\node at (2.8,0.9) {\tiny\textbullet};
	\node at (3.2,0) {\tiny\textbullet};
	\node at (3.2,0.9) {\tiny\textbullet};
	\draw (0,0.9) --(0.42,0);
	\draw (0.4,0.9) .. controls (1,0.4) and (1.8,0.4) .. (2.4,0.9);
	\draw (0.8,0.9) .. controls (1.2,0.5) and (2.4,0.5) .. (2.8,0.9);
	\draw (1.6,0.9) --(3.2,0);
	\draw (2,0.9) --(2.8,0);
	\draw (3.2,0.9) --(1.6,0);
	\draw (0,0) .. controls (0.5,0.5) and (1.5,0.5) .. (2,0);
	\draw (0.8,0) .. controls (1.2,0.4) and (2,0.4) .. (2.4,0);
\node at (0,-0.9) {\tiny\textbullet};
\node at (0.42,-0.9) {\tiny\textbullet};
\node at (0.8,-0.9) {\tiny\textbullet};
\node at (1.6,-0.9) {\tiny\textbullet};
\draw (1.2,1.3) --(1.2,-1.3);	
\node at (2.0,-0.9) {\tiny\textbullet};
\node at (2.4,-0.9) {\tiny\textbullet};	
\node at (2.8,-0.9) {\tiny\textbullet};
\node at (3.2,-0.9) {\tiny\textbullet};
\draw (0.42,0) --(0,-0.9);
\draw (0,0) .. controls (0.6,-0.5) and (1.8,-0.5) .. (2.4,0);
\draw (0.8,0) .. controls (1.2,-0.35) and (1.6,-0.35) .. (2,0);
\draw (1.6,0) --(2,-0.9);
\draw (2.8,0) --(1.6,-0.9);
\draw (3.2,0) --(3.2,-0.9);
\draw (0.4,-0.9) .. controls (0.8,-0.35) and (2,-0.35) .. (2.4,-0.9);
\draw (0.8,-0.9) .. controls (1.2,-0.35) and (2.4,-0.35) .. (2.8,-0.9);
\node at (1.3,-1.6) {$D_2$};
	\end{tikzpicture}
	\end{array} =\, \de
	\begin{array}{c}
	\begin{tikzpicture}
	\node at (0,1.4) {$1$};
\node at (0.8,1.4) {$3$};
\node at (0.42,1.4) {$2$};
\node at (1.6,1.4) {$4$};
\node at (2.4,1.4) {$\cdots$};
\node at (3.2,1.4) {$8$};
\node at (0,0) {\tiny\textbullet};
\node at (0,0.9) {\tiny\textbullet};
\node at (0.42,0) {\tiny\textbullet};
\node at (0.42,0.9) {\tiny\textbullet};
\node at (0.8,0) {\tiny\textbullet};
\node at (0.8,0.9) {\tiny\textbullet};
\node at (1.6,0) {\tiny\textbullet};
\node at (1.6,0.9) {\tiny\textbullet};
\draw (1.2,1.3) --(1.2,-0.3);
\node at (2.0,0) {\tiny\textbullet};	
\node at (2.0,0.9) {\tiny\textbullet};
\node at (2.4,0) {\tiny\textbullet};	
\node at (2.4,0.9) {\tiny\textbullet};
\node at (2.8,0) {\tiny\textbullet};	
\node at (2.8,0.9) {\tiny\textbullet};
\node at (3.2,0) {\tiny\textbullet};
\node at (3.2,0.9) {\tiny\textbullet};
\draw (0,0) --(0,0.9);
\draw (1.6,0.9) --(3.2,0);
\draw (2,0.9) --(1.6,0);
\draw (3.2,0.9) --(2,0);
\draw (0.4,0.9) .. controls (1,0.4) and (1.8,0.4) .. (2.4,0.9);
\draw (0.8,0.9) .. controls (1.2,0.5) and (2.4,0.5) .. (2.8,0.9);
\draw (0.4,0) .. controls (0.8,0.4) and (2,0.4) .. (2.4,0);
\draw (0.8,0) .. controls (1.2,0.4) and (2.4,0.4) .. (2.8,0);
	\end{tikzpicture}
	\end{array}
	\end{equation*}
	\vskip -0.3cm
	\caption{Multiplication of two walled Brauer diagrams}
	\label{fig:vect}
\end{figure}
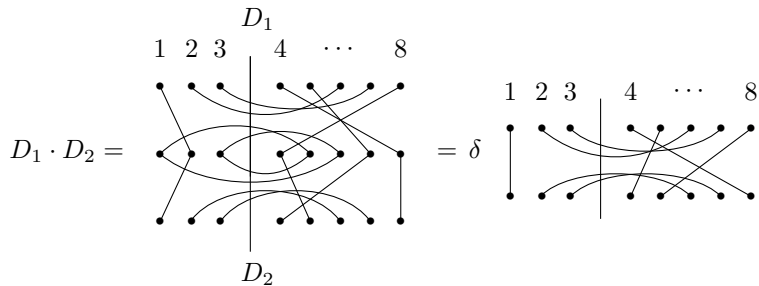

The following lemma on the multiplication of diagram algebras is well-known.
We write it here for the sake of description convenience later.
It is helpful to point out that the lemma is valid for not only walled Brauer diagrams, but also
Temperley-Lieb diagrams, partition diagrams, and so on.

\begin{lemma}\label{capnever}
The number of propagating edges
never increases and the number of  arcs never decreases in a concatenation of two diagrams.
\end{lemma}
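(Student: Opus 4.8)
The plan is to track the two quantities—number of propagating edges and number of arcs—through a single concatenation $A\cdot B$ of two $n$-diagrams, and to show that each edge of the resulting diagram $C$ is ``built'' from a path in the stacked picture whose type is constrained by the types of the pieces it traverses. First I would set up notation: write the stacked diagram as three rows of $n$ dots (top row of $A$, the identified middle row, bottom row of $B$), and observe that every connected component of this picture is either a closed curve (contributing a factor of $\delta$ and then discarded) or a simple path between two dots lying in the top row of $A$ and/or the bottom row of $B$. Each such path becomes exactly one edge of $C$, and conversely every edge of $C$ arises this way, so it suffices to analyse the possible paths.

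Next I would classify a path by the sequence of pieces (edges of $A$ and edges of $B$) it uses. The key local observation is that whenever the path passes through a dot of the middle row it must enter along an edge of $A$ and leave along an edge of $B$ (or vice versa), since the middle dots are shared. Hence the path alternates between $A$-pieces and $B$-pieces. Now consider how the path can terminate: an endpoint in the top row of $A$ is reached by an $A$-piece, and an endpoint in the bottom row of $B$ is reached by a $B$-piece. Combining these, a path that yields a \emph{propagating} edge of $C$ (one endpoint in the top of $A$, one in the bottom of $B$) must begin with an $A$-piece and end with a $B$-piece, and since it alternates, it must contain at least one \emph{propagating} edge of $A$ and at least one \emph{propagating} edge of $B$ among its pieces—an arc of $A$ has both endpoints in a single row of $A$, so it can never be the piece that reaches the top boundary, and similarly for $B$. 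This gives an injection from the propagating edges of $C$ into (say) the propagating edges of $A$: send a propagating edge of $C$ to the first $A$-piece of its path, which is propagating and is not reused by any other path. Therefore the number of propagating edges of $C$ is at most the number of propagating edges of $A$; by the left–right symmetry of the argument it is also at most the number for $B$, so it never increases.

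The statement about arcs then follows by a counting identity: in any $n$-diagram the $2n$ dots are partitioned by the edges, a propagating edge using one top and one bottom dot and an arc using two dots of a single row. Writing $p$ for the number of propagating edges and noting that the numbers of top and bottom arcs are determined once $p$ is known (top arcs use $(n-p)/2$ pairs, likewise bottom), the total number of arcs equals $n-p$. Since $p$ does not increase under concatenation, $n-p$ does not decrease, which is the claim. I expect the main obstacle to be purely expository rather than mathematical: making the ``each path becomes one edge, alternating between $A$- and $B$-pieces'' description precise enough that the injection on propagating edges is unambiguous, and handling the degenerate possibility that a path starts and ends in the same row (yielding an arc of $C$) uniformly within the same bookkeeping. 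None of this uses any feature special to walled Brauer diagrams, which is why the lemma holds verbatim for Temperley–Lieb and partition diagrams as remarked.
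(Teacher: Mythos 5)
The paper does not prove this lemma at all --- it is stated as ``well-known'' and used without argument --- so there is no proof of record to compare against. Your path-tracing argument is correct and complete: the decomposition of the stacked picture into closed loops and boundary-to-boundary paths alternating between $A$-pieces and $B$-pieces, the observation that a path realizing a propagating edge of $C$ must begin with a propagating edge of $A$ (a top arc of $A$ would terminate the path in the top row immediately), the resulting injection into the propagating edges of $A$ (and of $B$ by symmetry), and the identity $\#\{\text{arcs}\}=n-p$ converting the first claim into the second are exactly the standard argument one would write down. The only cosmetic point is the phrase ``an arc of $A$ can never be the piece that reaches the top boundary'': a top arc of $A$ does touch the top boundary, but then both its endpoints are boundary dots and the path yields an arc of $C$ rather than a propagating edge, which is the case you in fact handle correctly a line earlier; tightening that sentence is all that is needed.
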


\subsection{Cellular structures}

Cellular algebras were introduced by Graham and Lehrer \cite{GL} in 1996 in order to study the
non-semisimple specializations of many important algebras, including Hecke
algebras \cite{G}, Brauer algebras, Birman-Wenzl algebras \cite{X} and so on.
In \cite{cox2}, Cox et al. proved that the walled Brauer algebras are cellular.
In this section, we review the cellular structure.

Denote by $\Sigma_n$  the
symmetric group on $n$ letters. Evidently, each element in $\Sigma_n$
can be represented by an $n$-diagram without arcs.
Denote $\Sigma_i \times \Sigma_j$ by $\Sigma_{i,j}$.
If we regard the  diagrams in $\Sigma_{i,j}$ as the juxtaposition of
diagrams in $\Sigma_i$ and $  \Sigma_j$, then $\Sigma_{i,j}$ is a subset
of $\Sigma_{i+j}$.

Moreover, we need to define a $k$-linear space consisting of half diagrams. Given an $(r, s)$-walled diagram
with $l$ arcs in each row, we can obtain an upper ``half diagram" by cutting the diagram
horizontally in half.
Considering the example of $D_1$ (subsection 2.1), we get an upper $(3, 5, 2)$-half diagram as follows.
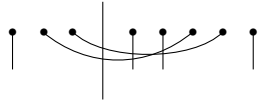
\begin{figure}[H]
	\begin{equation*}
	\begin{array}{c}
	\begin{tikzpicture}
	\node at (0,0.9) {\tiny\textbullet};
	\node at (0.42,0.9) {\tiny\textbullet};
	\node at (0.8,0.9) {\tiny\textbullet};
	\node at (1.6,0.9) {\tiny\textbullet};
	\draw (1.2,1.3) --(1.2,0);
	\node at (2.0,0.9) {\tiny\textbullet};
	\node at (2.4,0.9) {\tiny\textbullet};
	\node at (2.8,0.9) {\tiny\textbullet};
	\node at (3.2,0.9) {\tiny\textbullet};
	\draw (0,0.4) --(0,0.9);
	\draw (1.6,0.9) --(1.6,0.4);
	\draw (2,0.9) --(2,0.4);
	\draw (3.2,0.9) --(3.2,0.4);
	\draw (0.4,0.9) .. controls (1,0.4) and (1.8,0.4) .. (2.4,0.9);
	\draw (0.8,0.9) .. controls (1.2,0.5) and (2.4,0.5) .. (2.8,0.9);
	\end{tikzpicture}
	\end{array}
	\end{equation*}
 \caption{Half diagram}
\label{Fig}
\end{figure}
Denote by $V_{r,s}^l$ the $k$-linear space spanned by all $(r, s,l)$-half diagrams.
Then $V_{r,s}^l$ has a natural $B_{r,s}(\de)$-module structure with $B_{r,s}(\de)$-action
defined by the multiplication of diagrams. Hereinafter, $V_{r,s}^l$ is called an \textit{$(r, s,l)$-partial diagram module.}
Note that the result of this action is zero when
the number of propagating edges decreases.

\smallskip

In \cite{cox2}, the walled Brauer algebra $B_{r,s}(\delta)$ was
proved to be the \textit{iterated inflation} of group algebras $k\Sigma_{i,j}$
along $V_{r, s}^l$, and consequently, it is cellular
according to the inflation theory introduced by Koenig and Xi in \cite{KX2}.
More precisely, there is a chain of two-sided ideals
${0}= J_0 \subseteq J_1 \subseteq ... \subseteq J_n =B_{r,s}(\delta)$ such that
each subquotient $J_l/J_{l-1}$ (called a \textit{layer}) is a non-unital algebra
of the form $ V_{r,s}^l \otimes V_{r,s}^l \o k\Sigma_{r-l,s-l}$.
For arbitrary $x \in J_l/J_{l-1}$ and $y \in J_k/J_{k-1}$, we have $xy \in J_{\min\{l,k\}}$, that is,
the product cannot move to a higher layer.

Now, we  give more details about this cellular structure.
Let $\lambda^{L}=(\lambda_{l_{1}}^{L}, \lambda_{l_{2}}^{L}, \ldots, \lambda_{l_{k}}^{L})$
be a partition of $r-l$,
and $\lambda^{R}=(\lambda_{l_{1}}^{R}, \lambda_{l_{2}}^{R}, \ldots, \lambda_{l_{k^{'}}}^{R})$
a partition of $s-l$.
It is well-known that the cell modules
of $k\Sigma_{r-l,s-l}$ are precisely those modules of the form $S^{\la^L} \boxtimes S^{\la^R}$,
where $S^{\la^L}$ is a Specht module of $k\Sigma_{r-l}$ and $S^{\la^R}$ a Specht module of $k\Sigma_{s-l}$,
and thus the modules can be labeled by pairs
$(\lambda^{L},\lambda^{R})$.
For each integer $0\leqslant l\leqslant \min(r,s)$, we set
\begin{align*}
&\!\!\!\! \Lambda_{r,s}(l):=\{ (\lambda^L,\lambda^R) \mid \lambda^L\vdash r-l ; \lambda^R\vdash s-l\}\ \text{and}\ \Lambda_{r,s}:=\!\!\!\!\!\bigcup_{l=0}^{\min(r,s)}\!\!\Lambda_{r,s}(l).
\end{align*}
Then, the cell modules of algebra $B_{r,s}(\delta)$ are indexed by  set $\Lambda_{r,s}$ (see \cite{cox2} for details),
and cell module $\Delta_{r,s}(\la^L,\la^R)$
has the form $V_{r,s}^{l}\o (S^{\la^L}\boxtimes S^{\la^R})$.
The $B_{r,s}(\delta)$-action
on $ V_{r,s}^l\o (S^{\la^L}\boxtimes S^{\la^R})$ is defined as follows.

Given an $(r,s)$-walled diagram $x$ and a pure tensor $v\o s \in V_{r,s}^l\o (S^{\la^L}\boxtimes S^{\la^R})$, we define
$$x(v\o s)=\left\{\begin{array}{ll}
(xv)\o \pi(x,v)s& \text{if $xv$ has $l$ crossed arcs,}\\
0& \text{otherwise,}
\end{array}\right.$$
where $xv$ is as given above, and $\pi(x,v) \in \Sigma_{r-l,s-l}$ is the permutation on the labeled dots of $xv$.

\smallskip

To conclude this section, we generalize \cite[Proposition 3]{HP1}
from Brauer algebras to walled Brauer algebras.

\begin{prop}\label{aalc} Keep the notations as above.
	Let $M$ and $N$ be $k\Sigma_{r-l,s-l}$-modules, and let  $V_{r,s}^l$ be a partial diagram module of $B_{r,s}$. Then $$
	\Hom_{B_{r,s}} (V_{r,s}^l\o M, V_{r,s}^l\o N)\cong \Hom_{k\Sigma_{r-l,s-l}} (M, N).$$
	
\end{prop}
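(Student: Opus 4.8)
\emph{Plan of proof.} Throughout, $\otimes$ means $\otimes_{k\Sigma_{r-l,s-l}}$, as is forced by the description of the cell module $\Delta_{r,s}(\la^L,\la^R)=V_{r,s}^l\otimes(S^{\la^L}\boxtimes S^{\la^R})$; recall also that $V_{r,s}^l$ is a $(B_{r,s},k\Sigma_{r-l,s-l})$-bimodule which is \emph{free} as a right $k\Sigma_{r-l,s-l}$-module, a basis being the set of ``arc patterns'' (half diagrams in which the propagating edges are placed in one fixed order). I would prove the proposition by showing that the tautological map
$$\Theta:\ \Hom_{k\Sigma_{r-l,s-l}}(M,N)\longrightarrow \Hom_{B_{r,s}}(V_{r,s}^l\otimes M,\,V_{r,s}^l\otimes N),\qquad f\longmapsto \mathrm{id}_{V_{r,s}^l}\otimes f,$$
is a bijection. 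That $\Theta(f)$ is $B_{r,s}$-linear is immediate from the action rule $x(v\otimes m)=(xv)\otimes\pi(x,v)m$, since $\mathrm{id}\otimes f$ only affects the second factor and $f$ is $\Sigma_{r-l,s-l}$-equivariant. Injectivity is also easy: freeness of $V_{r,s}^l$ over $k\Sigma_{r-l,s-l}$ gives $V_{r,s}^l\otimes M\cong M^{\oplus N_l}$ (with $N_l$ the number of arc patterns), compatibly with $\Theta(f)\cong f^{\oplus N_l}$, so $\Theta(f)=0$ forces $f=0$.

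The real work is surjectivity, and the clean case is $\delta\neq 0$. Let $u\in V_{r,s}^l$ be the half diagram whose $l$ arcs join the $l$ dots immediately left of the wall to the $l$ dots immediately right of it, with the remaining $r-l$ and $s-l$ propagating edges straight (so $u$ carries the trivial permutation), and let $e^{(l)}\in B_{r,s}$ be the walled Brauer diagram with these $l$ arcs in both rows and all other strands propagating straight down; then $e^{(l)}e^{(l)}=\delta^l e^{(l)}$, so $\varepsilon:=\delta^{-l}e^{(l)}$ is idempotent. A short diagram computation shows $e^{(l)}\cdot(u\sigma)=\delta^l u\sigma$ for all $\sigma$, while $e^{(l)}\cdot(w\sigma)=0$ for every arc pattern $w\neq u$ (the concatenation $e^{(l)}w$ acquires more than $l$ arcs, and by Lemma \ref{capnever} this never drops), so $\varepsilon$ acts on $V_{r,s}^l\otimes M$ as the projection onto $u\otimes M\cong M$. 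Given $\phi\in\Hom_{B_{r,s}}(V_{r,s}^l\otimes M,V_{r,s}^l\otimes N)$, it commutes with the action of $\varepsilon$, hence $\phi(u\otimes m)=\varepsilon\,\phi(u\otimes m)\in u\otimes N$; write $\phi(u\otimes m)=u\otimes f(m)$. Applying $\phi$ to $x_\sigma(u\otimes m)$, where $x_\sigma\in\Sigma_{r-l}\times\Sigma_{s-l}\subseteq B_{r,s}$ permutes the outer strands and satisfies $x_\sigma(u\otimes m)=u\otimes\sigma m$, yields $f(\sigma m)=\sigma f(m)$, so $f\in\Hom_{k\Sigma_{r-l,s-l}}(M,N)$. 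Finally, since $\delta\neq 0$ every half diagram equals $\delta^{-l}x\cdot u$ for a suitable diagram $x$, so $u$ generates $V_{r,s}^l$ over $B_{r,s}$ and $u\otimes M$ generates $V_{r,s}^l\otimes M$; as the $B_{r,s}$-maps $\phi$ and $\Theta(f)$ agree on $u\otimes M$, they coincide, whence $\phi=\Theta(f)$.

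The remaining case $\delta=0$ is where this argument does not transfer — $e^{(l)}$ then acts as zero on $V_{r,s}^l\otimes M$ and $u$ no longer generates $V_{r,s}^l$ — and I expect it to be the main obstacle. I would handle it either by a deformation argument (work over $k[t]$ with $\delta=t$, where all the modules remain free over $k[t]$ because $V_{r,s}^l$ is free over $k[t]\Sigma_{r-l,s-l}$, run the previous case over the fraction field $k(t)$, and then descend to $t=0$), or, more hands-on, by a triangularity argument: fixing a suitable partial order on arc patterns in which $u$ is minimal, one shows by descending induction (choosing, for each $w\neq u$, a diagram $x$ with $xu=u$ that sends $w$ into ``strictly smaller'' patterns together with lower layers) that in $\phi(u\otimes m)=\sum_w w\otimes n_w(m)$ all terms with $w\neq u$ vanish; this is essentially the method used for Brauer algebras in \cite[Proposition 3]{HP1}, and the diagrammatic bookkeeping of Section \ref{sec: definition} makes it routine, if tedious, to carry over. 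Either route reduces the degenerate parameter to the generic one and completes the proof.
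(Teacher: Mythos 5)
Your $\delta\neq 0$ argument is essentially the paper's own proof: your distinguished half diagram $u$ is the paper's $v_0$, your idempotent $\varepsilon=\delta^{-l}e^{(l)}$ is the paper's $e_{r,s,l}$, and your surjectivity step via ``$u$ generates $V_{r,s}^l$ over $B_{r,s}$'' is exactly the paper's observation that every $v\in V_{r,s}^l$ equals $av_0$ for some $a\in B_{r,s}$. The only divergence is that you explicitly flag $\delta=0$ and sketch two repairs, whereas the paper is silent on this point even though its own idempotent carries the factor $\delta^{-l}$ and hence tacitly assumes $\delta\neq 0$; this is a fair observation about the paper rather than a defect of your proposal.
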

\begin{proof}
Denote by $v_0 \in V_{r,s}^l$ the half diagram
\begin{figure}[H]
	\begin{equation*}
	\begin{array}{c}
	\begin{tikzpicture}
	\node at (-0.4,0.9) {\tiny\textbullet};
	\node at (-0.8,0.9) {$\cdots$};
	\node at (-0.8,1.1) {\scriptsize$r-l$};
	\draw (-0.4,0.4) --(-0.4,0.9);
	\node at (0,0.9) {\tiny\textbullet};
	\node at (-1.2,0.9) {\tiny\textbullet};
	\draw (-1.2,0.4) --(-1.2,0.9);
	\draw (0,0.9) .. controls (0.4,0.5) and (2,0.5) .. (2.4,0.9);
	
	\node at (0.8,0.9) {\tiny\textbullet};
	\node at (0.4,0.9) {$\cdots$};
	\node at (0.4,1.1) {\scriptsize$l$};
	\draw (0.8,0.9) .. controls (1.1,0.7) and (1.3,0.7) .. (1.6,0.9);
	\node at (1.6,0.9) {\tiny\textbullet};
	\draw[thick] (1.2,1.2) --(1.2,0.2);
	
	\node at (2.0,0.9) {$\cdots$};
	\node at (2,1.1) {\scriptsize$l$};
	
	\node at (2.4,0.9) {\tiny\textbullet};
	\node at (2.8,0.9) {\tiny\textbullet};
	
	\node at (3.2,0.9) {$\cdots$};
	\node at (3.2,1.1) {\scriptsize$s-l$};
	\node at (3.6,0.9) {\tiny\textbullet};

	\draw (2.8,0.9) --(2.8,0.4);
	\draw (3.6,0.9) --(3.6,0.4);
	
	\end{tikzpicture}
	\end{array}
	\end{equation*}
	\vskip -0.3cm
\end{figure}
and define an idempotent $e_{r,s,l}$ to be
\begin{equation*} e_{r,s,l}= \frac{1}{\de^{l}}
\begin{array}{c}
\begin{tikzpicture}
\node at (-0.4,0.9) {\tiny\textbullet};
\node at (-0.4,0) {\tiny\textbullet};
\node at (-0.8,0.9) {$\cdots$};
\node at (-0.8,0) {$\cdots$};
\node at (-0.8,1.1) {\scriptsize$r-l$};
\draw (-0.4,0) --(-0.4,0.9);
\node at (0,0.9) {\tiny\textbullet};
\node at (0,0) {\tiny\textbullet};
\node at (-1.2,0.9) {\tiny\textbullet};
\node at (-1.2,0) {\tiny\textbullet};
\draw (-1.2,0) --(-1.2,0.9);
\draw (0,0.9) .. controls (0.4,0.5) and (2,0.5) .. (2.4,0.9);

\node at (0.8,0.9) {\tiny\textbullet};
\node at (0.8,0) {\tiny\textbullet};
\node at (0.4,0.9) {$\cdots$};
\node at (0.4,0) {$\cdots$};
\node at (0.4,1.1) {\scriptsize$l$};
\draw (0.8,0.9) .. controls (1.1,0.7) and (1.3,0.7) .. (1.6,0.9);
\node at (1.6,0.9) {\tiny\textbullet};
\node at (1.6,0) {\tiny\textbullet};
\draw[thick] (1.2,1.2) --(1.2,-0.2);

\node at (2.0,0.9) {$\cdots$};
\node at (2.0,0) {$\cdots$};
\node at (2,1.1) {\scriptsize$l$};

\node at (2.4,0.9) {\tiny\textbullet};
\node at (2.4,0) {\tiny\textbullet};
\node at (2.8,0.9) {\tiny\textbullet};
\node at (2.8,0) {\tiny\textbullet};
\node at (3.2,0.9) {$\cdots$};
\node at (3.2,0) {$\cdots$};
\node at (3.2,1.1) {\scriptsize$s-l$};
\node at (3.6,0.9) {\tiny\textbullet};	
\node at (3.6,0) {\tiny\textbullet};

\draw (0,0) .. controls (0.4,0.45) and (2,0.45) .. (2.4,0);
\draw (2.8,0.9) --(2.8,0);
\draw (3.6,0.9) --(3.6,0);
\draw (0.8,0) .. controls (1.1,0.2) and (1.3,0.2) .. (1.6,0);

\end{tikzpicture}
\end{array}
\end{equation*}

Let $\phi \in\Hom_{B_{r, s}} ( V_{r, s}^l\o M,  V_{r, s}^l\o N)$. For arbitrary $m\in M$, assume that
$$\phi( v_0 \o m)= \sum_{i\in I}   v_i\o n_i.$$ Apply the idempotent $e_{r,s,l}$
on both sides of the equality above. Note that $$e_{r,s,l}( v_0\o m)=   v_0\o m.$$ Moreover, each $e_{r,s,l}\cdot v_i$
is either zero or a multiple of $v_0$ and thus there exists certain $n\in N$ such that $$e_{r,s,l}.\sum v_i\otimes n_i= v_0\o n.$$
Consequently, we obtain a map $\hat{\phi}(m)=n$ from $M$ to $N$ induced by $\phi$ and it is easy to
check that $\hat{\phi}\in \Hom_{k\Sigma_{r-l,s-l}} (M, N)$.

On the other hand, for each $v \in  V_{r,s}^l$ there exists an element
$a \in B_{r, s}$ such that $v = a v_0 $.  Therefore, for each $m\in M$ and $v \in  V_{r,s}^l$ we have
\begin{eqnarray*}
	\phi( v \o m)	&=&  \phi((a v_0)\o m)=\phi((a v_0)\o \pi(a, v_0)\pi^{-1}(a, v_0)m)\\
	& = &a\phi( v_0\o \pi^{-1}(a, v_0)m)=a( v_0\o\hat{\phi}(\pi^{-1}(a, v_0)m))\\
	& = &v\o \pi(a, v_0)\hat{\phi}(\pi^{-1}(a, v_0)m)=v\o \hat{\phi}(m).
\end{eqnarray*}
That is, given a homomorphism $\hat{\phi}\in \Hom_{k\Sigma_{r-l,s-l}} (M, N)$,
we can define a map $\phi$ from previous equalities.
Furthermore, we claim that $\phi$ is a $B_{r,s}$-homomorphism. In fact, for arbitrary $d\in B_{r,s}$,
\begin{eqnarray*}
	d\phi(v\o m)	&=&  d(v\o\hat{\phi}(m))=dv\o (\pi(d,v)\hat{\phi}(m))\\
	& = & dv\o \hat{\phi}(\pi(d,v)m)=\phi(dv\o \pi(d,v)m)\\
	& = &\phi(d( v\o m))
\end{eqnarray*}
and this completes the proof.
\end{proof}

\section{ Tensor product and  twisted tensor product}

Let $B_{r,s}(\delta)$ and $B_{n,m}(\delta)$ be two walled Brauer algebras.
The aim of this section is to show that the tensor product algebra $B_{r,s}(\delta) \o_k B_{n,m}(\delta)$
is actually a subalgebra of $B_{r+n,s+m}(\delta)$.
The key is the so-called \textit{twisted tensor product} of walled Brauer algebras. We define this product in this section.
First, we need to define two kinds of mappings  $\iota$ and  $\zeta$,  which are used to extend a walled Brauer diagram to a bigger one.

Denote by $I_n$ the diagram of the identity of a Brauer algebra $B_n(\delta)$.
For $r,s,n,$ and $m\in \mathbb{N}$,  let $D$  be an $(r,s)$-walled  diagram in $B_{r,s}(\delta)$.
Define $\iota_{n,m}(D)$ to be an $(r+n,s+m)$-walled  diagram
in $B_{r+n,s+m}(\delta)$ obtained by inserting diagrams
$I_n$ and $I_m$ closing to the left and right sides of the wall, respectively, as shown in Fig.\ref{fig:3}(a).
Similarly, $\zeta_{n,m}(D)$ is an $(r+n,s+m)$-diagram, which is shown in Fig.\ref{fig:3}(b) obtained by juxtaposing diagrams $I_n$ and $I_m$ to  the left and  right sides of diagram $D$, respectively.

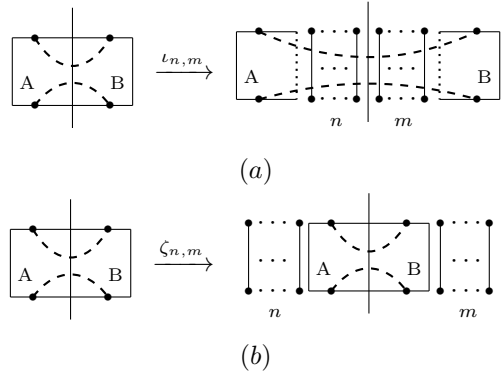
\begin{figure}[H]
	\begin{equation*}
	\begin{array}{c}
	\begin{tikzpicture}
\node at (0.3,0) {\tiny\textbullet};
\node at (0.3,0.9) {\tiny\textbullet};
\node at (1.3,0) {\tiny\textbullet};
\node at (1.3,0.9) {\tiny\textbullet};
\node at (0.2,0.3) {\scriptsize A};
\node at (1.4,0.3) {\scriptsize B};
\draw (0.8,1.3) --(0.8,-0.3);
\draw (0,0.9) --(1.6,0.9);
\draw (1.6,0) --(1.6,0.9);
\draw (0,0) --(0,0.9);
\draw (0,0) --(1.6,0);
\draw [thick,dashed] (0.3,0.9) .. controls (0.6,0.4) and (1,0.4) .. (1.3,0.9);
\draw [thick,dashed] (0.3,0) .. controls (0.6,0.4) and (1,0.4) .. (1.3,0);
	\end{tikzpicture}
	\end{array}\xrightarrow{\iota_{n,m}}
	\begin{array}{c}
	\begin{tikzpicture}
\node at (0.3,0) {\tiny\textbullet};
\node at (0.3,0.9) {\tiny\textbullet};
\node at (3.2,0) {\tiny\textbullet};
\node at (3.2,0.9) {\tiny\textbullet};
\node at (0.2,0.3) {\scriptsize A};
\node at (3.3,0.3) {\scriptsize B};
\draw [thick,dotted] (0.8,0.9) --(0.8,0);
\draw (0,0.9) --(0.8,0.9);
\draw (1.75,-0.3) --(1.75,1.3);
\draw [thick,dotted](2.7,0.9) --(2.7,0);
\draw (3.5,0.9) --(3.5,0);
\draw (0,0) --(0,0.9);
\draw (0,0) --(0.8,0);
\draw (2.7,0) --(3.5,0);
\draw (2.7,0.9) --(3.5,0.9);
\draw [thick,dashed] (0.3,0.9) .. controls (1.3,0.45) and (2.2,0.45) .. (3.2,0.9);
\draw [thick,dashed] (0.3,0) .. controls (1.3,0.28) and (2.2,0.28) .. (3.2,0);
\node at (1,0) {\tiny\textbullet};
\node at (1,0.9) {\tiny\textbullet};
\draw (1,0) --(1,0.9);
\node at (1.6,0) {\tiny\textbullet};
\node at (1.6,0.9) {\tiny\textbullet};
\draw (1.6,0) --(1.6,0.9);
\node at (1.9,0) {\tiny\textbullet};
\node at (1.9,0.9) {\tiny\textbullet};
\draw (1.9,0) --(1.9,0.9);
\node at (2.5,0) {\tiny\textbullet};
\node at (2.5,0.9) {\tiny\textbullet};
\draw (2.5,0) --(2.5,0.9);
\node at (1.33,0) { $\cdots$};
\node at (1.33,0.9) { $\cdots$};
\node at (1.33,0.4) { $\cdots$};
\node at (1.33,-0.3) {\scriptsize $n$};
\node at (2.22,0.4) { $\cdots$};
\node at (2.22,0) { $\cdots$};
\node at (2.22,0.9) { $\cdots$};
\node at (2.22,-0.3) {\scriptsize $m$};
	\end{tikzpicture}
	\end{array}	
	\end{equation*}
	$$(a)$$
	\begin{equation*}
\begin{array}{c}
\begin{tikzpicture}
\node at (0.3,0) {\tiny\textbullet};
\node at (0.3,0.9) {\tiny\textbullet};
\node at (1.3,0) {\tiny\textbullet};
\node at (1.3,0.9) {\tiny\textbullet};
\node at (0.2,0.3) {\scriptsize A};
\node at (1.4,0.3) {\scriptsize B};
\draw (0.8,1.3) --(0.8,-0.3);
\draw (0,0.9) --(1.6,0.9);
\draw (1.6,0) --(1.6,0.9);
\draw (0,0) --(0,0.9);
\draw (0,0) --(1.6,0);
\draw [thick,dashed] (0.3,0.9) .. controls (0.6,0.4) and (1,0.4) .. (1.3,0.9);
\draw [thick,dashed] (0.3,0) .. controls (0.6,0.4) and (1,0.4) .. (1.3,0);
\end{tikzpicture}
\end{array}\xrightarrow{\zeta_{n,m}}
\begin{array}{c}
\begin{tikzpicture}
\node at (0.3,0) {\tiny\textbullet};
\node at (0.3,0.9) {\tiny\textbullet};
\node at (1.3,0) {\tiny\textbullet};
\node at (1.3,0.9) {\tiny\textbullet};
\node at (0.2,0.3) {\scriptsize A};
\node at (1.4,0.3) {\scriptsize B};
\draw (0.8,1.3) --(0.8,-0.3);
\draw (0,0.9) --(1.6,0.9);
\draw (1.6,0) --(1.6,0.9);
\draw (0,0) --(0,0.9);
\draw (0,0) --(1.6,0);
\draw [thick,dashed] (0.3,0.9) .. controls (0.6,0.4) and (1,0.4) .. (1.3,0.9);
\draw [thick,dashed] (0.3,0) .. controls (0.6,0.4) and (1,0.4) .. (1.3,0);
\node at (-0.8,0) {\tiny\textbullet};
\node at (-0.8,0.9) {\tiny\textbullet};
\draw (-0.8,0) --(-0.8,0.9);
\node at (-0.12,0) {\tiny\textbullet};
\node at (-0.12,0.9) {\tiny\textbullet};
\draw (-0.12,0) --(-0.12,0.9);
\node at (1.75,0) {\tiny\textbullet};
\node at (1.75,0.9) {\tiny\textbullet};
\draw (1.75,0) --(1.75,0.9);
\node at (2.4,0) {\tiny\textbullet};
\node at (2.4,0.9) {\tiny\textbullet};
\draw (2.4,0) --(2.4,0.9);
\node at (-0.45,0) { $\cdots$};
\node at (-0.45,0.9) { $\cdots$};
\node at (-0.45,0.4) { $\cdots$};
\node at (-0.45,-0.3) {\scriptsize $n$};
\node at (2.12,0.4) { $\cdots$};
\node at (2.12,0) { $\cdots$};
\node at (2.12,0.9) { $\cdots$};
\node at (2.12,-0.3) {\scriptsize $m$};
\end{tikzpicture}
\end{array}
\end{equation*}
	$$(b)$$
	\vskip -0.3cm
	\caption{Schematic diagrams of mappings $\iota_{n,m}$ and $\zeta_{n,m}$}
	\label{fig:3}
\end{figure}

According to the definition of $\iota_{n,m}$ and $\zeta_{n,m}$, the following two lemmas are obvious.

\begin{lemma} Let $D_1, D_2$  be  $(r,s)$-walled  diagrams. Then
\begin{enumerate}
\item[(1)]\, $\iota_{n,m}(D_1 \cdot D_2)=
	\iota_{n,m}(D_1 )\cdot\iota_{n,m}( D_2)$
\item[(2)]\, $\zeta_{n,m}(D_1 \cdot D_2)=
	\zeta_{n,m}(D_1 )\cdot\zeta_{n,m}( D_2)$
\end{enumerate}
\end{lemma}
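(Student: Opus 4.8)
The plan is to argue purely diagrammatically, using the fact that $\iota_{n,m}$ (and likewise $\zeta_{n,m}$) only relabels the dots of $D$ and adjoins a block of $n+m$ vertical propagating strands that do not interact with the strands of $D$. Both identities will then follow from the observation that these adjoined strands behave like identity strands under concatenation.

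First I would fix the labelling conventions for $\iota_{n,m}(D)$: the first $r$ top and bottom dots of $D$ keep their labels, the next $n$ dots carry the strands of $I_n$, then come the $m$ dots carrying the strands of $I_m$, and finally the last $s$ dots of $D$ are shifted up by $n+m$; every propagating edge of $D$ becomes a propagating edge on the same side of the wall, and every arc of $D$ becomes an arc that reaches around the inserted block. In particular $\iota_{n,m}(D)$ is again a walled Brauer diagram, since no propagating edge crosses the wall and every arc still does.

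Next I would analyze the concatenation $\iota_{n,m}(D_1)\cdot\iota_{n,m}(D_2)$. Identifying the bottom row of the first factor with the top row of the second, the $i$-th inserted strand ($1\le i\le n$) of $\iota_{n,m}(D_1)$, namely the propagating edge $\{r+i,\overline{r+i}\}$, meets exactly the $i$-th inserted strand of $\iota_{n,m}(D_2)$, and the two join to a single propagating edge $\{r+i,\overline{r+i}\}$ of the product; similarly for the $m$ inserted strands to the right of the wall. None of these strands is attached to a strand coming from $D_1$ or $D_2$, and none of them can lie on a closed curve. Hence the remaining part of $\iota_{n,m}(D_1)\cdot\iota_{n,m}(D_2)$ — together with the number $t$ of closed curves produced — is obtained by following exactly the same connections as in the concatenation $D_1\cdot D_2$, with the labels to the right of the wall shifted by $n+m$. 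Writing $D_1\cdot D_2=\delta^t C$, this says precisely $\iota_{n,m}(D_1)\cdot\iota_{n,m}(D_2)=\delta^t\,\iota_{n,m}(C)=\iota_{n,m}(D_1\cdot D_2)$, which is (1). For (2) the same reasoning applies verbatim, the only change being that the $n$ inserted strands now sit at the extreme left and the $m$ inserted strands at the extreme right rather than adjacent to the wall; again they pass straight through the concatenation and do not affect the curve count.

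The only point requiring care — and the one I would spell out — is the claim that the inserted strands never lie on a closed curve and never reconnect to strands of $D_1$ or $D_2$. This is where one uses that $I_n$ and $I_m$ are identity diagrams: the bottom endpoint of an inserted strand in the first factor is matched, in the second factor, with the top endpoint of the correspondingly placed inserted strand there, so the trajectory of an inserted strand through the stacked diagram stays within the inserted block and exits as a single propagating edge. Once this is granted, the bijection between the non-inserted part of $\iota_{n,m}(D_1)\cdot\iota_{n,m}(D_2)$ and the diagram $D_1\cdot D_2$, preserving the closed-curve count, is immediate, and the lemma follows. An essentially equivalent route is to observe that $\iota_{n,m}$ and $\zeta_{n,m}$ are restrictions of the corresponding maps on the ambient Brauer algebras, for which the analogous multiplicativity is standard.
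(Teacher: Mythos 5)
Your argument is correct and is exactly the diagrammatic reasoning the paper relies on: the paper states this lemma without proof, declaring it obvious from the definitions of $\iota_{n,m}$ and $\zeta_{n,m}$, and your write-up simply makes explicit the key point (the inserted identity strands pass straight through the concatenation, creating no closed curves and not interfering with the strands of $D_1$ or $D_2$). No gap; you have just spelled out what the authors left implicit.
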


\begin{lemma}\label{3.2}
Let $r,s,n,m\in \mathbb{N}$. Let $D$ be an $(r, s)$-diagram and let $D^{'}$ be an $(n, m)$-diagram. Then
$$\iota_{n,m}(D)\cdot\zeta_{r,s}( D^{'})=\zeta_{r,s}(D^{'} )\cdot\iota_{n,m}( D)$$
\end{lemma}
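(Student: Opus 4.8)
The plan is to verify the identity by a direct diagrammatic comparison, showing that both products yield the same $(r+n, s+m)$-walled diagram with no closed loops. The key observation is that $\iota_{n,m}(D)$ and $\zeta_{r,s}(D')$ occupy ``complementary'' sets of columns: in $\iota_{n,m}(D)$ the strands of $D$ run through the outer columns (positions $1,\dots,r$ on the left of the wall and the last $s$ columns on the right), while the inserted identities $I_n, I_m$ are vertical strands sitting in the $n+m$ columns immediately adjacent to the wall. In $\zeta_{r,s}(D')$ the roles are reversed: $D'$ occupies the $n+m$ columns adjacent to the wall, and the juxtaposed $I_r, I_s$ are vertical strands in the outer columns. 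So in each of the two concatenations, one of the two factors contributes \emph{only vertical strands} in exactly the columns where the other factor does its nontrivial work.

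First I would set up coordinates for the dots: label the top (and symmetrically bottom) row of an $(r+n, s+m)$-diagram as $L_1,\dots,L_r$ (outer left), then $P_1,\dots,P_n$ (inner left), then the wall, then $Q_1,\dots,Q_m$ (inner right), then $R_1,\dots,R_s$ (outer right). In this notation, $\iota_{n,m}(D)$ acts as $D$ on the $L$- and $R$-dots and as the identity on each $P_i$ and $Q_j$; and $\zeta_{r,s}(D')$ acts as the identity on each $L_i$ and $R_j$ and as $D'$ on the $P$- and $Q$-dots. Next I would compute $\iota_{n,m}(D)\cdot\zeta_{r,s}(D')$ by concatenation: following a strand that begins among the $L$- or $R$-dots, the top factor $\iota_{n,m}(D)$ routes it according to $D$ (possibly to another $L$/$R$ dot in the middle row, or across as an arc), and then the bottom factor $\zeta_{r,s}(D')$, being the identity on all $L$/$R$ columns, passes it straight down; dually, a strand beginning among the $P$- or $Q$-dots passes straight through the top factor and is then routed by $D'$ in the bottom factor. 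The resulting diagram therefore has $D$ ``acting on the outer columns'' and $D'$ ``acting on the inner columns'' simultaneously, with no interaction and no closed loops formed (a closed loop would require a strand to return to its starting region after a nontrivial detour through \emph{both} factors, which cannot happen since each column is nontrivial in at most one factor). Computing $\zeta_{r,s}(D')\cdot\iota_{n,m}(D)$ the same way gives literally the same description of where every strand goes, hence the same diagram.

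The only genuine care needed is bookkeeping about the wall and about arcs. I would check: (i) no closed curves arise in either product, so no power of $\delta$ appears on either side — this follows because the only columns where a strand can be ``re-routed'' are disjoint for the two factors, so the concatenation graph is a disjoint union (as far as loops are concerned) of the $D$-part and the $D'$-part, neither of which contains a loop; and (ii) the resulting diagram is genuinely an $(r+n, s+m)$-walled diagram, i.e. every arc crosses the wall and no propagating edge does — this is inherited from $D$ and $D'$ being walled diagrams together with the fact that the inserted identity strands are propagating edges that stay on their own side of the wall. I expect step (ii), the verification that the wall conditions are preserved and that the outer/inner strand positions interleave correctly around the wall, to be the main (though still routine) obstacle; once the picture is set up carefully with the $L,P,Q,R$ labelling, both products are seen to equal the diagram in which $D$ is drawn on the outer columns and $D'$ on the inner columns, and the lemma follows.
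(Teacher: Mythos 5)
Your argument is correct and takes essentially the same route as the paper, which simply declares the identity obvious from the definitions of $\iota_{n,m}$ and $\zeta_{r,s}$ and illustrates it with a figure of the two stacked concatenations. Your column-by-column bookkeeping (the two factors are nontrivial on disjoint sets of columns, so each strand is rerouted by at most one factor, no closed loops form, and both orders yield the diagram with $D$ on the outer and $D'$ on the inner columns) is precisely the verification the paper leaves implicit.
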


Note that Lemma \ref{3.2} can be clarified from the following figure.
\begin{figure}[H]
	\begin{equation*}
	\begin{array}{c}
\begin{tikzpicture}
\node at (0.3,0) {\tiny\textbullet};
\node at (0.3,0.9) {\tiny\textbullet};
\node at (3.2,0) {\tiny\textbullet};
\node at (3.2,0.9) {\tiny\textbullet};
\node at (0.2,0.3) {\scriptsize A};
\node at (3.3,0.3) {\scriptsize B};
\draw [thick,dotted] (0.8,0.9) --(0.8,0);
\draw (0,0.9) --(0.8,0.9);
\draw (1.75,-0.3) --(1.75,1.2);
\draw [thick,dotted](2.7,0.9) --(2.7,0);
\draw (3.5,0.9) --(3.5,0);
\draw (0,0) --(0,0.9);
\draw (0,0) --(0.8,0);
\draw (2.7,0) --(3.5,0);
\draw (2.7,0.9) --(3.5,0.9);
\draw [thick,dashed] (0.3,0.9) .. controls (1.3,0.45) and (2.2,0.45) .. (3.2,0.9);
\draw [thick,dashed] (0.3,0) .. controls (1.3,0.28) and (2.2,0.28) .. (3.2,0);
\node at (1,0) {\tiny\textbullet};
\node at (1,0.9) {\tiny\textbullet};
\draw (1,0) --(1,0.9);
\node at (1.6,0) {\tiny\textbullet};
\node at (1.6,0.9) {\tiny\textbullet};
\draw (1.6,0) --(1.6,0.9);
\node at (1.9,0) {\tiny\textbullet};
\node at (1.9,0.9) {\tiny\textbullet};
\draw (1.9,0) --(1.9,0.9);
\node at (2.5,0) {\tiny\textbullet};
\node at (2.5,0.9) {\tiny\textbullet};
\draw (2.5,0) --(2.5,0.9);
\node at (1.33,0) { $\cdots$};
\node at (1.33,0.9) { $\cdots$};
\node at (1.33,0.4) { $\cdots$};
\node at (1.33,1.15) {\scriptsize $n$};
\node at (2.22,0.4) { $\cdots$};
\node at (2.22,0) { $\cdots$};
\node at (2.22,0.9) { $\cdots$};
\node at (2.22,1.15) {\scriptsize $m$};
\end{tikzpicture}\vspace{-5mm}\\
\begin{tikzpicture}
\node at (0.3,0) {\tiny\textbullet};
\node at (0.3,0.9) {\tiny\textbullet};
\node at (1.3,0) {\tiny\textbullet};
\node at (1.3,0.9) {\tiny\textbullet};
\node at (0.2,0.3) {\scriptsize C};
\node at (1.4,0.3) {\scriptsize D};
\draw (0.8,1.2) --(0.8,-0.3);
\draw (0,0.9) --(1.6,0.9);
\draw (1.6,0) --(1.6,0.9);
\draw (0,0) --(0,0.9);
\draw (0,0) --(1.6,0);
\draw [thick,dashed] (0.3,0.9) .. controls (0.6,0.4) and (1,0.4) .. (1.3,0.9);
\draw [thick,dashed] (0.3,0) .. controls (0.6,0.4) and (1,0.4) .. (1.3,0);
\node at (-0.9,0) {\tiny\textbullet};
\node at (-0.9,0.9) {\tiny\textbullet};
\draw (-0.9,0) --(-0.9,0.9);
\node at (-0.12,0) {\tiny\textbullet};
\node at (-0.12,0.9) {\tiny\textbullet};
\draw (-0.12,0) --(-0.12,0.9);
\node at (1.77,0) {\tiny\textbullet};
\node at (1.77,0.9) {\tiny\textbullet};
\draw (1.77,0) --(1.77,0.9);
\node at (2.5,0) {\tiny\textbullet};
\node at (2.5,0.9) {\tiny\textbullet};
\draw (2.5,0) --(2.5,0.9);
\node at (-0.47,0) { $\cdots$};
\node at (-0.47,0.9) { $\cdots$};
\node at (-0.47,0.4) { $\cdots$};
\node at (-0.47,-0.3) {\scriptsize $r$};
\node at (2.14,0.4) { $\cdots$};
\node at (2.14,0) { $\cdots$};
\node at (2.14,0.9) { $\cdots$};
\node at (2.14,-0.3) {\scriptsize $s$};
\end{tikzpicture}
	\end{array}=
	\begin{array}{c}
		\begin{tikzpicture}
	\node at (0.3,0) {\tiny\textbullet};
	\node at (0.3,0.9) {\tiny\textbullet};
	\node at (1.3,0) {\tiny\textbullet};
	\node at (1.3,0.9) {\tiny\textbullet};
	\node at (0.2,0.3) {\scriptsize C};
	\node at (1.4,0.3) {\scriptsize D};
	\draw (0.8,1.3) --(0.8,-0.3);
	\draw (0,0.9) --(1.6,0.9);
	\draw (1.6,0) --(1.6,0.9);
	\draw (0,0) --(0,0.9);
	\draw (0,0) --(1.6,0);
	\draw [thick,dashed] (0.3,0.9) .. controls (0.6,0.4) and (1,0.4) .. (1.3,0.9);
	\draw [thick,dashed] (0.3,0) .. controls (0.6,0.4) and (1,0.4) .. (1.3,0);
	\node at (-0.9,0) {\tiny\textbullet};
	\node at (-0.9,0.9) {\tiny\textbullet};
	\draw (-0.9,0) --(-0.9,0.9);
	\node at (-0.14,0) {\tiny\textbullet};
	\node at (-0.14,0.9) {\tiny\textbullet};
	\draw (-0.14,0) --(-0.14,0.9);
	\node at (1.75,0) {\tiny\textbullet};
	\node at (1.75,0.9) {\tiny\textbullet};
	\draw (1.75,0) --(1.75,0.9);
	\node at (2.5,0) {\tiny\textbullet};
	\node at (2.5,0.9) {\tiny\textbullet};
	\draw (2.5,0) --(2.5,0.9);
	\node at (-0.47,0) { $\cdots$};
	\node at (-0.47,0.9) { $\cdots$};
	\node at (-0.47,0.4) { $\cdots$};
	\node at (-0.47,1.2) {\scriptsize $r$};
	\node at (2.15,0.4) { $\cdots$};
	\node at (2.15,0) { $\cdots$};
	\node at (2.15,0.9) { $\cdots$};
	\node at (2.15,1.2) {\scriptsize $s$};
	\end{tikzpicture}\vspace{-5mm}\\
	\begin{tikzpicture}
	\node at (0.3,0) {\tiny\textbullet};
	\node at (0.3,0.9) {\tiny\textbullet};
	\node at (3.2,0) {\tiny\textbullet};
	\node at (3.2,0.9) {\tiny\textbullet};
	\node at (0.2,0.3) {\scriptsize A};
	\node at (3.3,0.3) {\scriptsize B};
	\draw [thick,dotted] (0.8,0.9) --(0.8,0);
	\draw (0,0.9) --(0.8,0.9);
	\draw (1.75,-0.3) --(1.75,1.3);
	\draw [thick,dotted](2.7,0.9) --(2.7,0);
	\draw (3.5,0.9) --(3.5,0);
	\draw (0,0) --(0,0.9);
	\draw (0,0) --(0.8,0);
	\draw (2.7,0) --(3.5,0);
	\draw (2.7,0.9) --(3.5,0.9);
	\draw [thick,dashed] (0.3,0.9) .. controls (1.3,0.45) and (2.2,0.45) .. (3.2,0.9);
	\draw [thick,dashed] (0.3,0) .. controls (1.3,0.28) and (2.2,0.28) .. (3.2,0);
	\node at (1,0) {\tiny\textbullet};
	\node at (1,0.9) {\tiny\textbullet};
	\draw (1,0) --(1,0.9);
	\node at (1.6,0) {\tiny\textbullet};
	\node at (1.6,0.9) {\tiny\textbullet};
	\draw (1.6,0) --(1.6,0.9);
	\node at (1.9,0) {\tiny\textbullet};
	\node at (1.9,0.9) {\tiny\textbullet};
	\draw (1.9,0) --(1.9,0.9);
	\node at (2.5,0) {\tiny\textbullet};
	\node at (2.5,0.9) {\tiny\textbullet};
	\draw (2.5,0) --(2.5,0.9);
	\node at (1.33,0) { $\cdots$};
	\node at (1.33,0.9) { $\cdots$};
	\node at (1.33,0.4) { $\cdots$};
	\node at (1.33,-0.3) {\scriptsize $n$};
	\node at (2.22,0.4) { $\cdots$};
	\node at (2.22,0) { $\cdots$};
	\node at (2.22,0.9) { $\cdots$};
	\node at (2.22,-0.3) {\scriptsize $m$};
	\end{tikzpicture}
	\end{array}
	\end{equation*}
\caption{Illustration of Lemma 3.2.}
\end{figure}
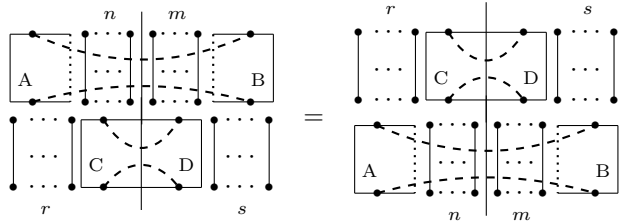

Now  we are in a position to give the definition of twisted tensor product of walled Brauer diagrams.
\begin{definition}  \label{tw}
Let $D$ be an $(r, s)$-diagram and let $D^{'}$ be an $(n, m)$-diagram.	
The \textit{twisted tensor product} $\boxtimes$ of diagrams $D$ and $D^{'}$ is an $(r+n,s+m)$-walled Brauer diagram defined by
	$$
	D \boxtimes D^{'}	:= \iota_{n,m}(D )\cdot\zeta_{r,s}( D^{'}).
	$$
\end{definition}

Figure 5 shows this definition more intuitively.

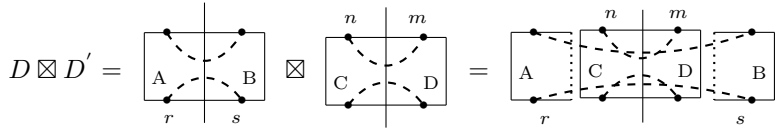
\begin{figure}[H]
\begin{equation*}D \boxtimes D^{'}=
\begin{array}{c}	\begin{tikzpicture}
\node at (0.3,0) {\tiny\textbullet};
\node at (0.3,0.9) {\tiny\textbullet};
\node at (1.3,0) {\tiny\textbullet};
\node at (1.3,0.9) {\tiny\textbullet};
\node at (0.2,0.3) {\scriptsize A};
\node at (1.4,0.3) {\scriptsize B};
\draw (0.8,1.3) --(0.8,-0.3);
\draw (0,0.9) --(1.6,0.9);
\draw (1.6,0) --(1.6,0.9);
\draw (0,0) --(0,0.9);
\draw (0,0) --(1.6,0);
\draw [thick,dashed] (0.3,0.9) .. controls (0.6,0.4) and (1,0.4) .. (1.3,0.9);
\draw [thick,dashed] (0.3,0) .. controls (0.6,0.4) and (1,0.4) .. (1.3,0);
\node at (0.33,-0.25) {\scriptsize $r$};

\node at (1.22,-0.25) {\scriptsize $s$};
\end{tikzpicture}
\end{array}\boxtimes
\begin{array}{c}
	\begin{tikzpicture}
\node at (0.3,0) {\tiny\textbullet};
\node at (0.3,0.9) {\tiny\textbullet};
\node at (1.3,0) {\tiny\textbullet};
\node at (1.3,0.9) {\tiny\textbullet};
\node at (0.2,0.3) {\scriptsize C};
\node at (1.4,0.3) {\scriptsize D};
\draw (0.8,1.3) --(0.8,-0.3);
\draw (0,0.9) --(1.6,0.9);
\draw (1.6,0) --(1.6,0.9);
\draw (0,0) --(0,0.9);
\draw (0,0) --(1.6,0);
\draw [thick,dashed] (0.3,0.9) .. controls (0.6,0.4) and (1,0.4) .. (1.3,0.9);
\draw [thick,dashed] (0.3,0) .. controls (0.6,0.4) and (1,0.4) .. (1.3,0);
\node at (0.33,1.15) {\scriptsize $n$};

\node at (1.22,1.15) {\scriptsize $m$};
\end{tikzpicture}
\end{array}=
\begin{array}{c}
\begin{tikzpicture}
\node at (0.3,0) {\tiny\textbullet};
\node at (0.3,0.9) {\tiny\textbullet};
\node at (3.2,0) {\tiny\textbullet};
\node at (3.2,0.9) {\tiny\textbullet};
\node at (0.2,0.36) {\scriptsize A};
\node at (3.3,0.36) {\scriptsize B};
\draw [thick,dotted] (0.8,0.9) --(0.8,0);
\draw (0,0.9) --(0.8,0.9);
\draw (1.75,-0.3) --(1.75,1.2);
\draw [thick,dotted](2.7,0.9) --(2.7,0);
\draw (3.5,0.9) --(3.5,0);
\draw (0,0) --(0,0.9);
\draw (0,0) --(0.8,0);
\draw (2.7,0) --(3.5,0);
\draw (2.7,0.9) --(3.5,0.9);
\draw [thick,dashed] (0.3,0.9) .. controls (1.3,0.54) and (2.2,0.54) .. (3.2,0.9);
\draw [thick,dashed] (0.3,0) .. controls (1.3,0.28) and (2.2,0.28) .. (3.2,0);

\node at (1.33,1.15) {\scriptsize $n$};

\node at (2.22,1.15) {\scriptsize $m$};
\end{tikzpicture}\vspace{-15.3mm}\\
\begin{tikzpicture}
\node at (0.3,0) {\tiny\textbullet};
\node at (0.3,0.9) {\tiny\textbullet};
\node at (1.3,0) {\tiny\textbullet};
\node at (1.3,0.9) {\tiny\textbullet};
\node at (0.2,0.36) {\scriptsize C};
\node at (1.4,0.36) {\scriptsize D};

\draw (0,0.9) --(1.6,0.9);
\draw (1.6,0) --(1.6,0.9);
\draw (0,0) --(0,0.9);
\draw (0,0) --(1.6,0);
\draw [thick,dashed] (0.3,0.9) .. controls (0.6,0.4) and (1,0.4) .. (1.3,0.9);
\draw [thick,dashed] (0.3,0) .. controls (0.6,0.4) and (1,0.4) .. (1.3,0);

\node at (-0.47,-0.3) {\scriptsize $r$};

\node at (2.13,-0.3) {\scriptsize $s$};
\end{tikzpicture}
\end{array}
\end{equation*}
	\vskip -0.3cm
	\caption{Twisted tensor product of walled Brauer diagrams}
	\label{fig:twist}
\end{figure}

The twisted tensor product is clearly associative. This fact is expressed in the form of the following lemma.

\begin{lemma}
The twisted tensor product $ \boxtimes $ is associative.
\end{lemma}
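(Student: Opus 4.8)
The plan is to reduce associativity of $\boxtimes$ to associativity of diagram concatenation in a single big walled Brauer algebra, using the two embedding-type maps $\iota$ and $\zeta$ together with Lemma \ref{3.2}. Let $D$ be an $(r,s)$-diagram, $D'$ an $(n,m)$-diagram, and $D''$ a $(p,q)$-diagram. By Definition \ref{tw}, $D\boxtimes D' = \iota_{n,m}(D)\cdot\zeta_{r,s}(D')$, which is an $(r+n,s+m)$-diagram; hence $(D\boxtimes D')\boxtimes D'' = \iota_{p,q}(\iota_{n,m}(D)\cdot\zeta_{r,s}(D'))\cdot\zeta_{r+n,s+m}(D'')$. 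Similarly $D'\boxtimes D'' = \iota_{p,q}(D')\cdot\zeta_{n,m}(D'')$ is an $(n+p,m+q)$-diagram, so $D\boxtimes(D'\boxtimes D'') = \iota_{n+p,m+q}(D)\cdot\zeta_{r,s}(\iota_{p,q}(D')\cdot\zeta_{n,m}(D''))$. The goal is to show these two $(r+n+p,s+m+q)$-walled Brauer diagrams coincide.

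First I would establish the elementary compatibility relations between nested applications of $\iota$ and $\zeta$, each of which is immediate from the pictorial description in Figure \ref{fig:3}: namely $\iota_{p,q}\circ\iota_{n,m} = \iota_{n+p,m+q}$ (inserting blocks of sizes $n,m$ and then $p,q$ next to the wall is the same as inserting blocks of sizes $n+p,m+q$), $\zeta_{r+n,s+m}\circ\zeta_{?}$-type identities, and the ``mixed'' relations $\iota_{p,q}\circ\zeta_{r,s} = \zeta_{r,s}\circ\iota_{p,q}$ and $\zeta_{r,s}\circ\iota_{n,m}$ versus $\iota_{n,m}\circ\zeta_{r,s}$ on diagrams of the appropriate sizes. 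Since both $\iota$ and $\zeta$ are multiplicative (the preceding unnamed Lemma, parts (1) and (2)), I can distribute them over the products appearing in the two expressions above. After doing so, both $(D\boxtimes D')\boxtimes D''$ and $D\boxtimes(D'\boxtimes D'')$ become a product of three factors built from $D$, $D'$, $D''$ by suitable compositions of $\iota$'s and $\zeta$'s inside the single algebra $B_{r+n+p,s+m+q}(\delta)$: roughly, a factor supported on the leftmost/rightmost $(r,s)$ positions, a middle factor on the $(n,m)$ positions, and a factor on the $(p,q)$ positions. Using the commutation relations — in particular Lemma \ref{3.2}, which lets factors on disjoint blocks on opposite sides of the wall pass through each other — I reorder the three factors in each expression into the same normal form, and conclude equality by associativity of concatenation.

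The main obstacle I expect is bookkeeping: there are three sizes $(r,s)$, $(n,m)$, $(p,q)$ and two sides of the wall, so one must be careful about exactly which block each $\iota$ or $\zeta$ inserts and in what position, and verify that the factor built from $D'$ really does land on the same $(n,m)$ columns in both groupings (on the left it arises as $\iota_{p,q}\zeta_{r,s}(D')$, on the right as $\zeta_{r,s}\iota_{p,q}(D')$, which agree by the mixed commutation relation), and similarly for $D''$. Once the three factors are seen to occupy mutually ``compatible'' positions, the rearrangement is forced. A cleaner alternative, which I would mention, is simply to observe from Figure \ref{fig:twist} that $D\boxtimes D'$ is the diagram obtained by placing $D$ and $D'$ in ``interleaved'' fashion — the $D$-columns straddling the wall on the outside and the $D'$-columns straddling it on the inside — and that this interleaving construction is manifestly associative because it only depends on the left-to-right order of the three diagrams' columns on each side of the wall, not on how they were bracketed; the formal proof above is just the algebraic shadow of this picture.
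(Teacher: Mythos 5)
Your argument is correct. The paper in fact offers no proof at all---it simply declares the associativity ``clear'' from the pictures---so your write-up is a faithful formalization of exactly the intended pictorial argument: both bracketings reduce, via multiplicativity of $\iota$ and $\zeta$ and the composition identities $\iota_{p,q}\circ\iota_{n,m}=\iota_{n+p,m+q}$, $\zeta_{r,s}\circ\zeta_{n,m}=\zeta_{r+n,s+m}$, and $\iota_{p,q}\circ\zeta_{r,s}=\zeta_{r,s}\circ\iota_{p,q}$, to the same three-fold product $\iota_{n+p,m+q}(D)\cdot\bigl(\iota_{p,q}\circ\zeta_{r,s}\bigr)(D')\cdot\zeta_{r+n,s+m}(D'')$, whereupon associativity of concatenation finishes the job. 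The only point worth adding explicitly is that no closed loops (hence no spurious powers of $\delta$) arise in these particular concatenations, since the $\iota$-images are identities on the inner columns and the $\zeta$-images are identities on the outer columns.
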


The following lemma implies that the twisted tensor product `$\boxtimes$' and
multiplication `$\cdot $' of diagrams are compatible. This enables us to accomplish the task of this section.

\begin{lemma}\label{compa}
Let $D_1 , D_2$  be $(r, s)$-walled Brauer
diagrams and let $D_1^{'} , D_2^{'}$  be $(n, m)$-walled Brauer diagrams.  Then
$$(D_1\boxtimes D_2) \cdot (D_{1}^{'}\boxtimes D_{2}^{'})=(D_1 \cdot D_2 ) \boxtimes ( D_{1}^{'} \cdot D_{2}^{'})$$
\end{lemma}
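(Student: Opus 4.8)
The plan is to unfold the definition of $\boxtimes$ on both sides of the asserted identity and then rearrange the resulting string of four diagrams using only Lemma \ref{3.2} together with the multiplicativity of $\iota_{n,m}$ and $\zeta_{r,s}$ recorded in the first of the two lemmas following Figure \ref{fig:3}.

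First I would expand the left-hand side by Definition \ref{tw}:
\[
(D_1\boxtimes D_2)\cdot(D_1'\boxtimes D_2')=\iota_{n,m}(D_1)\cdot\zeta_{r,s}(D_1')\cdot\iota_{n,m}(D_2)\cdot\zeta_{r,s}(D_2').
\]
The two middle factors are $\zeta_{r,s}(D_1')$, with $D_1'$ an $(n,m)$-diagram, and $\iota_{n,m}(D_2)$, with $D_2$ an $(r,s)$-diagram, so Lemma \ref{3.2} lets me interchange them; the expression becomes
\[
\iota_{n,m}(D_1)\cdot\iota_{n,m}(D_2)\cdot\zeta_{r,s}(D_1')\cdot\zeta_{r,s}(D_2').
\]
Applying the multiplicativity of $\iota_{n,m}$ to the first pair and of $\zeta_{r,s}$ to the second pair (both maps understood to be extended $k$-linearly, so that any $\delta$-power arising from closed loops is carried along) turns this into $\iota_{n,m}(D_1\cdot D_2)\cdot\zeta_{r,s}(D_1'\cdot D_2')$, which is precisely $(D_1\cdot D_2)\boxtimes(D_1'\cdot D_2')$ by Definition \ref{tw}. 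This is the right-hand side, so the lemma follows.

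The one point requiring attention — and the only, mild, obstacle in making the argument rigorous — is the bookkeeping of closed loops under concatenation. I would observe that in $\iota_{n,m}(D_1)\cdot\iota_{n,m}(D_2)$ the inserted blocks $I_n$ and $I_m$ consist solely of straight propagating strands that are joined to nothing but their counterparts in the other factor, hence create no closed loops, while the remaining strands are glued exactly as in $D_1\cdot D_2$; therefore the number $t$ of closed loops, and hence the scalar $\delta^{t}$, agrees with that of $D_1\cdot D_2$, and likewise for $\zeta_{r,s}$. This justifies invoking the multiplicativity lemma at the level of the algebras $B_{r+n,s+m}(\delta)$ and $B_{n,m}(\delta)$ rather than merely for single diagrams, after which the proof is a purely formal rearrangement.
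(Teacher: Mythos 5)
Your proof is correct and is essentially identical to the paper's: expand both twisted tensor products via Definition \ref{tw}, interchange the two middle factors by Lemma \ref{3.2}, and then apply the multiplicativity of $\iota_{n,m}$ and $\zeta_{r,s}$. Your extra remark on the loop/$\delta$-power bookkeeping (and your silent correction of the statement's index placement to $(D_1\boxtimes D_1')\cdot(D_2\boxtimes D_2')$) only adds care beyond what the paper writes.
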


\begin{proof}
According to Definition \ref{tw} and Lemma \ref{3.2}, we have
\begin{eqnarray}
(D_1\boxtimes D_{1}^{'}) \cdot (D_2\boxtimes D_{2}^{'})	&=& \big(\iota_{n,m}(D_1 )
\cdot\zeta_{r,s}( D_1^{'})\big)\cdot\big(\iota_{n,m}(D_2 )\cdot\zeta_{r,s}( D_2^{'})\big)\nonumber\\
& = & \big(\iota_{n,m}(D_1 )\cdot\iota_{n,m}(D_2 )\big)\cdot\big( \zeta_{r,s}( D_1^{'})\cdot\zeta_{r,s}( D_2^{'})\big)\nonumber\\
& = & \iota_{n,m}(D_1 \cdot D_2 )\cdot \zeta_{r,s}( D_1^{'}\cdot D_2^{'})\nonumber\\
& = & (D_1 \cdot D_2 ) \boxtimes (D_{1}^{'} \cdot D_{2}^{'})\nonumber.
\end{eqnarray} The proof is completed.
\end{proof}

By extending the twisted tensor product of diagrams linearly to algebras $B_{r,s}(\delta)$
and $B_{n,m}(\delta)$, we obtain the twisted tensor product algebra $B_{r,s}(\delta) \boxtimes B_{n,m}(\delta)$.
Denote by  $B_{r,s}(\delta) \o_k B_{n,m}(\delta)$ the ordinary tensor product algebra.
Then we can give the main result of this section, which is derived directly from Lemma \ref{compa}.

\begin{thm}	\label{iso}
The twisted tensor product $B_{r,s}(\delta) \,\boxtimes\, B_{n,m}(\delta)$ is
a subalgebra of $B_{r+n,s+m}(\delta)$, and it
is isomorphic to  $B_{r,s}(\delta) \o_k B_{n,m}(\delta)$ as $k$-algebras.
\end{thm}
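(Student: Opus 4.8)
The plan is to build the isomorphism $B_{r,s}(\delta)\o_k B_{n,m}(\delta)\cong B_{r,s}(\delta)\boxtimes B_{n,m}(\delta)$ by hand from the diagram bases, and then separately argue that the twisted tensor product algebra lands inside $B_{r+n,s+m}(\delta)$ as a subalgebra. First I would record the obvious facts: by Definition \ref{tw}, $D\boxtimes D'=\iota_{n,m}(D)\cdot\zeta_{r,s}(D')$ is, by construction, an honest $(r+n,s+m)$-walled Brauer diagram, so the $k$-linear span $B_{r,s}(\delta)\boxtimes B_{n,m}(\delta)$ is a $k$-subspace of $B_{r+n,s+m}(\delta)$; Lemma \ref{compa} says this subspace is closed under the multiplication of $B_{r+n,s+m}(\delta)$, hence it is a (non-unital a priori, but in fact unital since $I_r\boxtimes I_n=I_{r+n,s+m}$) subalgebra. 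That disposes of the first assertion.

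For the isomorphism, I would define the $k$-linear map
$$\Phi: B_{r,s}(\delta)\o_k B_{n,m}(\delta)\lra B_{r,s}(\delta)\boxtimes B_{n,m}(\delta),\qquad \Phi(D\o D')=D\boxtimes D',$$
on basis elements (pairs of walled Brauer diagrams) and extend linearly; this is well defined because tensor products of bases are bases. Surjectivity is immediate from the definition of the twisted tensor product algebra as the span of the $D\boxtimes D'$. The key algebraic point is multiplicativity: for basis diagrams,
$$\Phi\big((D_1\o D_1')(D_2\o D_2')\big)=\Phi\big((D_1\cdot D_2)\o(D_1'\cdot D_2')\big)=(D_1\cdot D_2)\boxtimes(D_1'\cdot D_2'),$$
which equals $(D_1\boxtimes D_1')\cdot(D_2\boxtimes D_2')=\Phi(D_1\o D_1')\cdot\Phi(D_2\o D_2')$ precisely by Lemma \ref{compa}. (One must be slightly careful that $D_1\cdot D_2=\delta^tC$ introduces the same scalar power $\delta^t$ on both sides, but since $\iota_{n,m}$ and $\zeta_{r,s}$ insert only vertical propagating strands away from the diagram, concatenating $\iota_{n,m}(D_1)$ with $\iota_{n,m}(D_2)$ produces exactly the same closed loops as concatenating $D_1$ with $D_2$; this is the content of part (1) of Lemma 3.3, and likewise for $\zeta$.) Since $\Phi$ also sends $1\o 1=I_{r,s}\o I_{n,m}$ to $I_{r+n,s+m}$, it is a unital algebra homomorphism.

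It remains to check injectivity, and I expect this to be the only nontrivial step. Because $\Phi$ is surjective, it suffices to compare dimensions: $\dim_k\big(B_{r,s}(\delta)\o_k B_{n,m}(\delta)\big)=\dim_k B_{r,s}(\delta)\cdot\dim_k B_{n,m}(\delta)$, so I must show the diagrams $\{D\boxtimes D'\}$, as $D$ ranges over a basis of $B_{r,s}(\delta)$ and $D'$ over a basis of $B_{n,m}(\delta)$, are pairwise distinct $(r+n,s+m)$-walled Brauer diagrams. For this I would show $\boxtimes$ is injective on pairs of diagrams by exhibiting a left inverse: from the diagram $D\boxtimes D'$ one can read off $D$ and $D'$ because the two "blocks'' of strands occupy disjoint sets of vertices (dots $1,\dots,r$ and $\overline{1},\dots,\overline{r}$ on the left of the wall and dots $\overline{\cdots}$ on the right for the $D$-part; the interleaved $n$/$m$ strands for the $D'$-part), and a propagating or arc edge of $D\boxtimes D'$ connects two $D$-vertices or two $D'$-vertices but never one of each — this is exactly what the commuting relation of Lemma \ref{3.2} guarantees, since $\iota_{n,m}(D)$ touches only the "old'' vertices and $\zeta_{r,s}(D')$ only the "new'' ones, after relabeling. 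Making this vertex bookkeeping precise (i.e. writing down the explicit order-preserving bijection between $\{1,\dots,r+n\}$ and the $D$-block $\sqcup$ $D'$-block of positions) is the routine-but-slightly-fiddly heart of the argument; once it is in place, reconstructing $(D,D')$ from $D\boxtimes D'$ is immediate, so $\Phi$ is injective and hence an isomorphism.
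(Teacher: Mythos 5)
Your proposal is correct and follows essentially the same route as the paper: the paper likewise defines the map on pairs of diagrams by $(D,D')\mapsto D\boxtimes D'$ (via the universal property of $\o_k$ rather than on a tensor basis), invokes Lemma \ref{compa} for multiplicativity, and asserts bijectivity. The only difference is that you spell out the injectivity step (reconstructing $(D,D')$ from the disjoint vertex blocks of $D\boxtimes D'$) and the absence of spurious $\delta$-factors, both of which the paper dismisses as "easy to check."
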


\begin{proof}
Twisted tensor product algebra $B_{r,s}(\delta) \,\boxtimes\, B_{n,m}(\delta)$ is clear
a subalgebra of $B_{r+n,s+m}(\delta)$.  So we only need to prove that it is isomorphic to $B_{r,s}(\delta) \o_k B_{n,m}(\delta)$.

Define a bilinear map $$g: B_{r,s}(\delta) \times B_{n,m}(\delta)\rightarrow B_{r,s}(\delta) \boxtimes B_{n,m}(\delta)$$ by
\[(	D
,	D^{'}	)
\mapsto
D \boxtimes D^{'}.
	\]
It is easy to check that the map $g$ is surjective and injective.
Denote by $$i:B_{r,s}(\delta)
\times B_{n,m}(\delta)\rightarrow B_{r,s}(\delta) \o B_{n,m}(\delta)$$ the canonical map.
Then there exists a unique $k$-linear mapping
$$\bar{g}:B_{r,s}(\delta) \o B_{n,m}(\delta)\rightarrow B_{r,s}(\delta) \boxtimes B_{n,m}(\delta)$$
satisfying $\bar{g}i=g$, which is bijective too.
Moreover, it is seen from Lemma \ref{compa} that map $\bar{g}$ is a $k$-algebra homomorphism, and we have proven the lemma.
\end{proof}

Clearly, a walled Brauer algebra $B_{r, s}(\delta)$ can be embedded into
$B_{r+1,s}(\delta)$ or $B_{r, s+1}(\delta)$ naturally.
These embeddings make the walled Brauer algebras form something looking like a bicomplex.

\begin{equation*}
\begin{array}{ccccccccc} B_{0,0}(\delta) &\hookrightarrow &B_{0,1}(\delta)
&\hookrightarrow& B_{0,2}(\delta) &\hookrightarrow& B_{0,3}(\delta)& \hookrightarrow& \cdots \\
\hookdownarrow & &\hookdownarrow & &\hookdownarrow  & &\hookdownarrow  & &   \\
B_{1,0}(\delta)&\hookrightarrow &B_{1,1}(\delta) &\hookrightarrow& B_{1,2}(\delta)
&\hookrightarrow& B_{1,3}(\delta)& \hookrightarrow& \cdots \\ \hookdownarrow
& &\hookdownarrow  & &\hookdownarrow  & &\hookdownarrow  & &   \\
B_{2,0}(\delta)&\hookrightarrow &B_{2,1}(\delta) &\hookrightarrow& B_{2,2}(\delta)
&\hookrightarrow& B_{2,3}(\delta)& \hookrightarrow& \cdots \\ \hookdownarrow
& &\hookdownarrow  & &\hookdownarrow  & &\hookdownarrow  & &  \\
\vdots
& &\vdots  & &\vdots  & &\vdots  & &  \\
\end{array}
\end{equation*}
Note that $B_{0,n}(\delta)=B_{n,0}(\delta)=k\Sigma_n$, the group algebra of the
symmetric group $\Sigma_n$ on $n$ letters.

If we denote by $G_{0}(B_{i,j}(\delta))$ the Grothendieck group of $B_{i,j}(\delta)$,
then we can obtain a bigraded $\mathbb{Z}$-module
$$G_{0}(B(\delta)):=\bigoplus\limits_{i,j\geqslant 0}G_{0}(B_{i,j}(\delta)).$$

Moreover, thanks to Theorem \ref{iso}, we have embeddings
$$\rho_{r,s,n,m}: B_{r,s}(\delta) \o_k B_{n,m}(\delta)\hookrightarrow B_{r+n,s+m}(\delta),$$
which enable us to define the multiplication for $G_{0}(B(\delta))$ and make $G_{0}(B(\delta))$ a ring.
In fact, let $M$ be a left $B_{r,s}(\delta)$-module, and  $N$ be a left $B_{n,m}(\delta)$-module.
Recall that tensor product $M\otimes_{k} N$ is a left
$B_{r,s}(\delta) \o_k B_{n,m}(\delta)$-module with action
$(a \otimes b)\cdot (w\otimes u)=aw \otimes bu$ for $a\in B_{r,s}(\delta), b\in B_{n,m}(\delta), w\in M$ and $u\in N$.
Then  in $G_{0}(B(\delta))$, the multiplication of the isomorphism classes of a $B_{r,s}(\delta)$-module $M$
and a $B_{n,m}(\delta)$-module $N$ can be defined by the induction product:
$$[M]\cdot [N] = [\ind_{\scriptsize{B_{r,s}(\delta)
\otimes B_{n,m}(\delta)}}^{\scriptsize{B_{r+n,s+m}(\delta)}}( M\otimes N)],$$  where
$$\ind_{\scriptsize{ B_{r,s}(\delta) \otimes B_{n,m}(\delta)}}^{\scriptsize{ B_{r+n,s+m}(\delta)}} (M\otimes N)
= B_{r+n,s+m}(\delta) \bigotimes_{ B_{r,s}(\delta) \otimes B_{n,m}(\delta)}(M\otimes N)$$
$$=\frac{ B_{r+n,s+m}(\delta) \otimes_k (M\otimes N)}{<a\otimes [(b\otimes c)(w\otimes u)]-
[a\rho_{r,s,n,m}(b\otimes c)]\otimes w\otimes u>}
$$
for $a\in  B_{r+n,s+m}(\delta), ~ b\in  B_{r,s}(\delta),~ c\in  B_{n,m}(\delta), w\in M$ and $u\in N$.

\smallskip

To ensure the multiplication
is well-defined and admits associativity, we require that the walled
Brauer algebras are semi-simple (see \cite[Theorem 3.5]{BL}).
We refer the reader to \cite[Theorem 6.3]{cox2} for relative results on semi-simplicity.

The structure constants of the Grothendieck ring are calculated in Section 5.

\bigskip

\section{Double-walled modules}\label{subsec: Double walled modules}

In this section, we
characterize  the restriction of cell modules. To this end, we introduce
\textit{double-walled modules}, which will be useful for
 describing the branching rules for cell modules
over a sequence of the  walled Brauer algebras.
We also give a composition series of the restricted module in Theorem \ref{keyp}.

\subsection{Double-walled diagram and actions}

The notions givin in the section are inspired by the following
observations.
Let $r_1, r_2, s_1, s_2 \in \mathbb{N}$  with $r_1 + r_2=r$ and $s_1 + s_2=s$.
For a partial one-row $(r, s,l)$ diagram $v\in V_{r,s}^{l}$, we can add two walls
between dots $r_1$, $r_1+1 $, and dots  $s_1$, $s_1+1 $, respectively. Then the
diagram is divided into four blocks, says $A, B, C$, and $D$ with $r_1, r_2, s_2$, and $s_1$ dots,
respectively (see the left diagram of Fig. \ref{moving} for an example).
We next endow the diagram with a $4$-\textit{tuple} $$(T_{AC},
T_{BD},T_{AD},T_{BC}),$$ wherein $T_{XY}$ is  the number of the arcs jointing from block $X$ to $Y$.
It is clear that  $T_{AC}+T_{BD}+T_{AD}+T_{BC}=l$.

For a diagram $v$, define a transformation $f$ by moving
its blocks $B, C$, together with the jointing arcs horizontally to the right side
of the block $D$. We illustrate $f$ as follows.

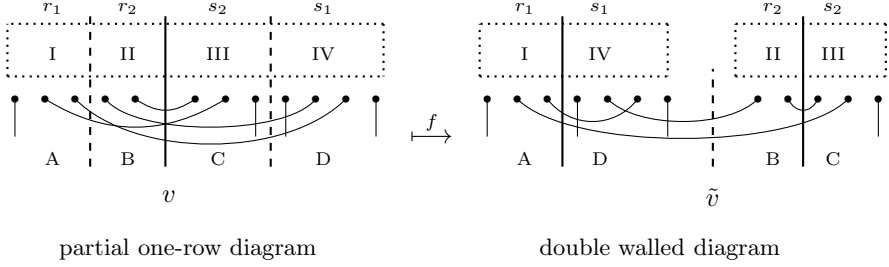
\begin{figure}[t]
	\begin{equation*}
	\begin{array}{c}
	\begin{tikzpicture}
	\node at (-0.4,0.9) {\tiny\textbullet};
		\draw (-0.4,0.9) .. controls (0.4,0.4) and (1.2,0.4) .. (2,0.9);
	\node at (-0.8,0.9) {\tiny\textbullet};
		\draw (-0.8,0.4) --(-0.8,0.9);
	\node at (0,0.9) {\tiny\textbullet};
	\draw (0,0.9) .. controls (0.8,0.1) and (2.8,0.1) .. (3.6,0.9);
		\draw [thick,dashed] (0.2,0) --(0.2,2);
	\node at (0.4,0.9) {\tiny\textbullet};
	\draw (0.4,0.9) .. controls (0.8,0.4) and (2.8,0.4) .. (3.2,0.9);
	\node at (0.8,0.9) {\tiny\textbullet};
		\draw (0.8,0.9) .. controls (1.1,0.7) and (1.3,0.7) .. (1.6,0.9);
	\node at (1.6,0.9) {\tiny\textbullet};
	\draw[thick] (1.2,2) --(1.2,0);
	
	\node at (2.0,0.9) {\tiny\textbullet};
	
	\node at (2.4,0.9) {\tiny\textbullet};
		\draw [thick,dashed] (2.6,0) --(2.6,2);
	\node at (2.8,0.9) {\tiny\textbullet};
	
	\node at (3.2,0.9) {\tiny\textbullet};
	\node at (3.6,0.9) {\tiny\textbullet};	
	\node at (4,0.9) {\tiny\textbullet};

	\draw (2.4,0.9) --(2.4,0.4);
	\draw (4,0.9) --(4,0.4);
		\draw (2.8,0.9) --(2.8,0.4);
\draw [thick,dotted] (-0.9,1.2) --(-0.9,1.9);
\draw [thick,dotted] (-0.9,1.2) --(4.1,1.2);
\draw [thick,dotted] (4.1,1.9) --(4.1,1.2);
\draw [thick,dotted] (4.1,1.9) --(-0.9,1.9);
	\node at (0.7,1.5) {\scriptsize II};
\node at (-0.3,1.5) {\scriptsize I};
\node at (1.9,1.5) {\scriptsize III};
\node at (3.3,1.5) {\scriptsize IV};

	\node at (0.7,0.1) {\scriptsize B};
		\node at (0.7,2.1) {\scriptsize $r_2$};
	\node at (-0.3,0.1) {\scriptsize A};
		\node at (-0.3,2.1) {\scriptsize $r_1$};
		\node at (1.9,0.1) {\scriptsize C};
			\node at (1.9,2.1) {\scriptsize $s_2$};
	\node at (3.3,0.1) {\scriptsize D};
		\node at (3.3,2.1) {\scriptsize $s_1$};
		\node at (1.25,-0.4) { $v$};
		\node at (1.5,-1.1) { \small \mbox{partial one-row diagram}};
	\end{tikzpicture}
	\end{array}
 \stackrel{f}{\longmapsto}
	\begin{array}{c}
	\begin{tikzpicture}
\draw [thick,dotted] (-0.9,1.2) --(-0.9,1.9);
\draw [thick,dotted] (-0.9,1.2) --(1.6,1.2);
\draw [thick,dotted] (1.6,1.9) --(1.6,1.2);
\draw [thick,dotted] (1.6,1.9) --(-0.9,1.9);
\node at (-0.3,1.5) {\scriptsize I};
\node at (0.7,1.5) {\scriptsize IV};
\node at (-0.4,0.9) {\tiny\textbullet};
\draw (-0.4,0.9) .. controls (0.3,0.2) and (3.3,0.2) .. (4,0.9);
\node at (-0.8,0.9) {\tiny\textbullet};
\draw (-0.8,0.4) --(-0.8,0.9);
\node at (0,0.9) {\tiny\textbullet};
\draw (0,0.9) .. controls (0.3,0.5) and (0.9,0.5) .. (1.2,0.9);
\draw [thick] (0.2,0) --(0.2,2);
\node at (0.4,0.9) {\tiny\textbullet};
\draw (0.4,0.9) --(0.4,0.4);
\node at (0.8,0.9) {\tiny\textbullet};
\draw (0.8,0.9) .. controls (1.2,0.5) and (2.4,0.5) .. (2.8,0.9);
\node at (1.2,0.9) {\tiny\textbullet};

\node at (1.6,0.9) {\tiny\textbullet};
\draw (1.6,0.9) --(1.6,0.4);
\draw[thick] (3.4,2) --(3.4,0);
\draw [thick,dashed] (2.2,0) --(2.2,1.3);

\node at (2.8,0.9) {\tiny\textbullet};

\node at (3.2,0.9) {\tiny\textbullet};
\draw (3.2,0.9) .. controls (3.3,0.7) and (3.5,0.7) .. (3.6,0.9);
\node at (3.6,0.9) {\tiny\textbullet};	
\node at (4,0.9) {\tiny\textbullet};
\node at (4.4,0.9) {\tiny\textbullet};

\draw (4.4,0.9) --(4.4,0.4);
\draw [thick,dotted] (2.5,1.2) --(2.5,1.9);
\draw [thick,dotted] (2.5,1.2) --(4.5,1.2);
\draw [thick,dotted] (4.5,1.9) --(4.5,1.2);
\draw [thick,dotted] (4.5,1.9) --(2.5,1.9);
\node at (3,1.5) {\scriptsize II};
\node at (3.8,1.5) {\scriptsize III};

\node at (0.7,0.1) {\scriptsize D};
	\node at (0.7,2.1) {\scriptsize $s_1$};
\node at (-0.3,0.1) {\scriptsize A};
	\node at (-0.3,2.1) {\scriptsize $r_1$};
\node at (3,0.1) {\scriptsize B};
	\node at (3,2.1) {\scriptsize $r_2$};
\node at (3.8,0.1) {\scriptsize C};
	\node at (3.8,2.1) {\scriptsize $s_2$};
	\node at (2.2,-0.4) { $\tilde{v}$};
		\node at (1.5,-1.1) { \small \mbox{double walled diagram}};
\end{tikzpicture}
	\end{array}
	\end{equation*}
	\caption{Horizontally moving the blocks of diagram}
	\label{moving}
\end{figure}

We call the image of $f$ an $(r_1|s_1,r_2|s_2)$-\textit{double-walled diagram},
and denote it by symbol $\tilde{v}$. In fact, such a diagram
can also be obtained by first juxtaposing two partial one-row diagrams and
then possibly jointing single dots from block $A$ to $C$, or from $D$ to $B$.

We can define $B_{r_1,s_1}(\de)\o B_{r_2,s_2}(\de)$-action on an $(r_1|s_1,r_2|s_2)$-double-walled diagram as in
the case of a partial diagram module. Let us describe the action by using the right part of Fig. \ref{moving}.
Taking an $(r_1,s_1)$-diagram $D_1$ (with parts I and IV) and
an $(r_2,s_2)$-diagram $D_2$ (with parts II and III), then
 $D_1 \o D_2 $ can be viewed as the juxtaposition of these two diagrams.  Let $\tilde{v}$ be an
 $(r_1|s_1,r_2|s_2)$-double walled diagram with the total number of arcs $l$. The action is given by
concatenating diagrams $D_1$ with parts $AD$ and $D_2$ with parts $BC$.
Then, by identifying the corresponding dots gives a new  $(r_1|s_1,r_2|s_2)$-double-walled diagram $v'$
possibly with $t$ closed curses. Then
$$
(D_1\otimes D_2).\tilde{v}=\begin{cases} 0,& \text{if $t>l$;}\\
\delta^tv',\, &\text{otherwise.}\end{cases}
$$
Therefore, if we denote by $\widetilde{V^{l}}_{r_1|s_1,r_2|s_2}$ the linear space spanned by
all $(r_1|s_1,r_2|s_2)$-double-walled diagrams with $l$ arcs,
then it is a $B_{r_1,s_1}(\de)\o B_{r_2,s_2}(\de)$-module with the action defined above.

\begin{lemma}\label{updown} Let $r_1, r_2, s_1$ and $s_2 \in \mathbb{N}$
with $r_1 + r_2=r$ and $s_1 + s_2=s$.
Let $\tilde{v}$ be an $(r_1|s_1,r_2|s_2)$-double-walled diagram
with the tuple $(T_{AC},T_{BD},T_{AD},T_{BC})$, and the
total number of arcs being $l$. For an $(r_1, s_1)$-diagram
$D_1$ and an $(r_2, s_2)$-diagram $D_2$,
suppose that $(D_1 \o D_2)\cdot \tilde{v}$ has $l^\pr$  arcs
with the tuple $(T_{AC}^\pr,T_{BD}^\pr,T_{AD}^\pr,T_{BC}^\pr)$. Then $l\leqslant l^\pr$.
If $l=l^\pr$, then
\begin{enumerate}	
\item[(1)] the numbers $T_{AD}, T_{BC}$
 	of arcs jointing blocks  never  increases;
 	
\item[(2)] the numbers $T_{AC}, T_{BD}$ of arcs jointing blocks never decreases.
\end{enumerate}
\end{lemma}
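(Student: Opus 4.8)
The plan is to track what happens to arcs of the double-walled diagram $\tilde v$ under the action of $D_1\otimes D_2$, using the basic fact in Lemma~\ref{capnever} applied to each of the two separate concatenations (with $D_1$ on the $AD$-part and with $D_2$ on the $BC$-part). Recall from the definition of the action that $(D_1\otimes D_2)\cdot\tilde v$ is computed by concatenating $D_1$ with the $AD$-block of $\tilde v$ and simultaneously concatenating $D_2$ with the $BC$-block of $\tilde v$, then identifying the shared dots; a closed loop is removed only from the resulting diagram, and if more than $l$ loops appear the result is declared zero. So throughout I may assume we are in the nonzero case and that no arc is ``lost'' to a closed loop beyond what is allowed.

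\textbf{The inequality $l\le l'$.} First I would observe that the total number of arcs cannot decrease. An arc of $\tilde v$ (of any of the four types $AC$, $BD$, $AD$, $BC$) either survives as an arc after the concatenation, or gets connected through a path in $D_1$ (resp.\ $D_2$) to other dots. By Lemma~\ref{capnever}, applied to the concatenation of $D_1$ with the $AD$-block and to the concatenation of $D_2$ with the $BC$-block separately, the number of arcs does not decrease in either piece, and propagating strands that get joined through $D_1$ or $D_2$ can only produce new arcs, never destroy the count. Combining the two halves gives $l\le l'$. (This is really the same argument as Lemma~\ref{capnever}, just bookkept for the double-walled picture.)

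\textbf{The case $l=l'$.} Assume now $l=l'$, so the concatenation is ``rigid'': no strand of $D_1$ or $D_2$ connects two formerly-distinct arcs, i.e.\ every propagating edge of $D_1$ and $D_2$ stays propagating and every arc stays an arc, and the only effect is relabeling plus possibly re-routing of the endpoints of the existing arcs of $\tilde v$ through the permutation-like top parts. The key structural point is: the top row $D_1$ acts only on the $A$- and $D$-dots (parts I and IV), and the top row $D_2$ acts only on the $B$- and $C$-dots (parts II and III), and since $D_1$ is an $(r_1,s_1)$-walled diagram, \emph{no propagating edge of $D_1$ crosses the wall separating $A$ from $D$}; likewise no propagating edge of $D_2$ crosses the wall separating $B$ from $C$. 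Hence in the rigid case, an $AD$-arc of $\tilde v$ has one endpoint in $A$ and one in $D$; tracing each endpoint up through $D_1$ (which cannot move an $A$-endpoint to $D$ or vice versa, as that would require a wall-crossing propagating edge, or would merge strands and drop the arc count), the resulting arc still joins $A$ to $D$ or else its endpoints got re-routed across the wall only via another arc of $\tilde v$ — but in the rigid ($l=l'$) case such a merge is impossible. So $AD$-arcs can only stay $AD$-arcs or disappear into a closed loop (when an $AD$-arc of $\tilde v$ meets an $AD$-arc of $D_1$ forming a closed curve; such a loop must then be ``paid for'' within the allowance, but that is consistent with $l=l'$ only if a new arc is simultaneously created — and a new arc joining $A$ to $D$, since again no wall-crossing propagating edges exist). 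In all cases the net $T_{AD}$ cannot increase; symmetrically $T_{BC}$ cannot increase. That proves (1). For (2), in the rigid case the total $T_{AC}+T_{BD}+T_{AD}+T_{BC}=l=l'=T_{AC}'+T_{BD}'+T_{AD}'+T_{BC}'$ is constant, and arcs of type $AC$ and $BD$ are exactly the ones ``crossing'' the outer wall between the $AD$-block and the $BC$-block (they are genuine walled-Brauer arcs of the ambient $(r_1+r_2,\,s_1+s_2)$ picture and must cross that wall): since $D_1$ touches only $AD$ and $D_2$ touches only $BC$, no concatenation move can turn an $AC$- or $BD$-arc into an $AD$- or $BC$-arc, nor destroy one without also destroying something on the other side — so $T_{AC}\le T_{AC}'$ and $T_{BD}\le T_{BD}'$. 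Together with (1) and the constancy of the total, (2) follows; in fact (1) and (2) force $T_{AD}=T_{AD}'$ and $T_{BC}=T_{BC}'$ as well.

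\textbf{Main obstacle.} The delicate point is the rigidity analysis when closed loops form: one must argue that in the $l=l'$ case, each closed curve removed is compensated by the creation of exactly one new arc of the \emph{same type}, using crucially that $D_1$ and $D_2$ are \emph{walled} Brauer diagrams (no propagating edge crosses the inner walls), so that a newly created arc has its endpoints forced into the same pair of blocks. I would make this precise by the standard device of writing each of $D_1$, $D_2$ as a product of a ``cap-cup'' diagram times a permutation, performing the concatenation in two stages, and observing that the permutation stage preserves block-types exactly (permutations within $\Sigma_{r_1}\times\Sigma_{s_1}$ and $\Sigma_{r_2}\times\Sigma_{s_2}$ act within blocks), while the cap-cup stage either preserves the arc count strictly — the $l=l'$ hypothesis — and then an elementary case check on a single cap/cup shows it cannot change any of the four tallies in the forbidden direction.
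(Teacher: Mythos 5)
Your argument for $l\le l'$ is fine and agrees with the paper, which also quotes Lemma~\ref{capnever}. For (1)--(2), however, the paper's proof is a one-line reduction that you did not find: rotate the $BC$-half of $\tilde v$ underneath the $AD$-half (Figure~7), so that $AC$- and $BD$-arcs become \emph{propagating edges} of a two-row picture while $AD$- and $BC$-arcs become \emph{row arcs}, and then apply Lemma~\ref{capnever} to the concatenation with $D_1$ from one side and $D_2$ from the other. Your endpoint-tracing argument instead hinges on a ``rigidity'' claim that is false: $l=l'$ does \emph{not} imply that no strand of $D_1$ merges two formerly distinct arcs of $\tilde v$. A cup of $D_1$ (on the side facing $\tilde v$) can merge an $AC$-arc with a $BD$-arc into a single path whose free ends lie in $B$ and $C$, while the matching cap of $D_1$ simultaneously creates a brand-new $AD$-arc, leaving $l$ unchanged. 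Concretely: take $r_1=s_1=r_2=s_2=1$, let $\tilde v$ consist of one $AC$-arc and one $BD$-arc (tuple $(1,1,0,0)$, $l=2$), let $D_1$ be the $(1,1)$-diagram with a single cap and a single cup, and let $D_2$ be the identity. Then $(D_1\otimes D_2)\cdot\tilde v$ consists of one $AD$-arc and one $BC$-arc (tuple $(0,0,1,1)$, $l'=2$, no closed loop). So $T_{AD}$ and $T_{BC}$ \emph{increase} while $T_{AC}$ and $T_{BD}$ \emph{decrease}; this refutes your claims that ``$AD$-arcs can only stay $AD$-arcs or disappear into a closed loop,'' that every new arc of $D_1$ must compensate a lost $AD$-arc, and that ``no concatenation move can turn an $AC$- or $BD$-arc into an $AD$- or $BC$-arc.''

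The same example shows that the statement as printed has its two conclusions interchanged: what is actually true (and what the rotation plus Lemma~\ref{capnever} proves) is that $T_{AC}+T_{BD}$, being the propagating number in the rotated picture, never increases, and $T_{AD}+T_{BC}$, being the arc number, never decreases. Only this corrected version is consistent with Remark~(1) following the lemma, with the chosen lexicographic order on the index set (where $T_{AC},T_{BD}$ carry the natural order and $T_{AD},T_{BC}$ the reversed one), and with $\widetilde{V}_{(r_1|s_1,r_2|s_2)}^{\leq\mathbf{t}}$ being a submodule. Because your proof arrives at the printed (reversed) inequalities, it cannot be repaired as written; its conclusion is contradicted by the two-strand example above. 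If you want to avoid the rotation trick, your closing idea of factoring $D_1$, $D_2$ into cap--cup diagrams times permutations can be made to work, but the single-cap/cup check must record that an outward-facing cap of $D_1$ always \emph{creates} an $AD$-arc and an inward-facing cup either closes a loop or \emph{merges} two arcs of $\tilde v$, possibly changing their type --- which drives the tallies in exactly the opposite direction to the one you assert.
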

\begin{proof}
Formula $l\leqslant l^\pr$ is a direct corollary of Lemma \ref{capnever}.
For (1-2), we first change the structure of a double-walled diagram
from ``left-right" to ``up-down" being illustrated as in Figure \ref{bimodule}.
The process is as follows. Given a double walled diagram $\tilde{v}$,
rotate the right side of the vertical dashed line (saying blocks $BC$)
clockwise, until two walls are connected with each other.
 \begin{figure}[t]
 	\begin{equation*}
 	\begin{array}{c}
 	\begin{tikzpicture}
 	\draw [thick,dotted] (-0.9,1.2) --(-0.9,1.9);
 	\draw [thick,dotted] (-0.9,1.2) --(1.6,1.2);
 	\draw [thick,dotted] (1.6,1.9) --(1.6,1.2);
 	\draw [thick,dotted] (1.6,1.9) --(-0.9,1.9);
 	\node at (-0.3,1.5) {\scriptsize I};
 	\node at (0.7,1.5) {\scriptsize IV};
 	\node at (-0.4,0.9) {\tiny\textbullet};
 	\draw (-0.4,0.9) .. controls (0.3,0.2) and (3.3,0.2) .. (4,0.9);
 	\node at (-0.8,0.9) {\tiny\textbullet};
 	\draw (-0.8,0.4) --(-0.8,0.9);
 	\node at (0,0.9) {\tiny\textbullet};
 	\draw (0,0.9) .. controls (0.3,0.5) and (0.9,0.5) .. (1.2,0.9);
 	\draw [thick] (0.2,0) --(0.2,2);
 	\node at (0.4,0.9) {\tiny\textbullet};
 	\draw (0.4,0.9) --(0.4,0.4);
 	\node at (0.8,0.9) {\tiny\textbullet};
 	\draw (0.8,0.9) .. controls (1.2,0.5) and (2.4,0.5) .. (2.8,0.9);
 	\node at (1.2,0.9) {\tiny\textbullet};
 	
 	\node at (1.6,0.9) {\tiny\textbullet};
 	\draw (1.6,0.9) --(1.6,0.4);
 	\draw[thick] (3.4,2) --(3.4,0);
 	\draw [thick,dashed] (2.2,0) --(2.2,1.3);
 	
 	\node at (2.8,0.9) {\tiny\textbullet};
 	
 	\node at (3.2,0.9) {\tiny\textbullet};
 	\draw (3.2,0.9) .. controls (3.3,0.7) and (3.5,0.7) .. (3.6,0.9);
 	\node at (3.6,0.9) {\tiny\textbullet};	
 	\node at (4,0.9) {\tiny\textbullet};
 	\node at (4.4,0.9) {\tiny\textbullet};

 	\draw (4.4,0.9) --(4.4,0.4);
 	\draw [thick,dotted] (2.5,1.2) --(2.5,1.9);
 	\draw [thick,dotted] (2.5,1.2) --(4.5,1.2);
 	\draw [thick,dotted] (4.5,1.9) --(4.5,1.2);
 	\draw [thick,dotted,->] (2.5,1.9) --(4.5,1.9);
 	\node at (3,1.5) {\scriptsize II};
 	\node at (3.8,1.5) {\scriptsize III};
 	
 	\node at (0.7,0.1) {\scriptsize D};
 	\node at (-0.3,0.1) {\scriptsize A};
 	\node at (3,0.1) {\scriptsize B};
 	\node at (3.8,0.1) {\scriptsize C};
 	\node at (2.2,-0.4) { $\tilde{v}$};
 	
 	\draw[->,thick] (3.4,-0.2) arc(0:-120:1.5);
 	\end{tikzpicture}
 	\end{array}
 	\leftrightsquigarrow
 	\begin{array}{c}
 	\begin{tikzpicture}
 	\draw [thick,dotted] (-0.9,1.2) --(-0.9,1.9);
 	\draw [thick,dotted] (-0.9,1.2) --(1.6,1.2);
 	\draw [thick,dotted] (1.6,1.9) --(1.6,1.2);
 	\draw [thick,dotted] (1.6,1.9) --(-0.9,1.9);
 	\node at (-0.3,1.5) {\scriptsize I};
 	\node at (0.7,1.5) {\scriptsize IV};
 	\node at (-0.4,0.9) {\tiny\textbullet};
 	\node at (-0.8,0.9) {\tiny\textbullet};
 	\draw (-0.8,0.4) --(-0.8,0.9);
 	\node at (0,0.9) {\tiny\textbullet};
 	\draw (0,0.9) .. controls (0.3,0.5) and (0.9,0.5) .. (1.2,0.9);
 	\draw [thick] (0.2,-2) --(0.2,2);
 	    \node at (0,-0.9) {\tiny\textbullet};
 		\node at (-0.4,-0.9) {\tiny\textbullet};
 	   	\node at (-0.8,-0.9) {\tiny\textbullet};
 	   		\node at (0.4,-0.9) {\tiny\textbullet};
 	   		\node at (0.8,-0.9) {\tiny\textbullet};
 	   		\draw (-0.8,-0.4) --(-0.8,-0.9);
 	   	\draw (0,-0.9) .. controls (0.1,-0.7) and (0.3,-0.7) .. (0.4,-0.9);
 	   	\draw (0.8,0.9) --(0.8,-0.9);
 	   		\draw (-0.4,0.9) --(-0.4,-0.9);
 	   	
 	   		\draw [thick,dotted] (-0.8,-1.2) --(-0.8,-1.9);
 	   	\draw [thick,dotted] (-0.8,-1.2) --(0.9,-1.2);
 	   	\draw [thick,dotted] (0.9,-1.9) --(0.9,-1.2);
 	   	\draw [thick,dotted,->] (0.9,-1.9) --(-0.8,-1.9);
 	   	\node at (0.55,-1.6) {\scriptsize II};
 	   	\node at (-0.3,-1.6) {\scriptsize III};

 	\node at (0.4,0.9) {\tiny\textbullet};
 	\draw (0.4,0.9) --(0.4,0.4);
 	\node at (0.8,0.9) {\tiny\textbullet};
 	\node at (1.2,0.9) {\tiny\textbullet};
 	
 	\node at (1.6,0.9) {\tiny\textbullet};
 	\draw (1.6,0.9) --(1.6,0.4);
 	
 	


 	\node at (0.7,0.1) {\scriptsize D};
 	\node at (-0.3,0.1) {\scriptsize A};
 		\node at (0.7,-0.2) {\scriptsize B};
 	\node at (-0.3,-0.2) {\scriptsize C};

 	\end{tikzpicture}
 	\end{array}
 	\end{equation*}
 	\vskip -0.3cm
 	\caption{}
 	\label{bimodule}
 \end{figure}
Then (1-2) follow from the module action being essentially a concatenation of  diagrams.
\end{proof}

\noindent{\bf Remarks} (1) When $l=l'$, Lemma \ref{updown} implies that
 $T_{AC}$ or $ T_{BD}$ will be certain to decrease by one once $T_{AD}$ or $T_{BC}$ increases by one, and vice versa.

\smallskip

(2)  $V_{r,s}^{l}$ also admits a $B_{r_1,s_1}(\de)\o B_{r_2,s_2}(\de)$-action
in terms of the mapping  $\bar{g}$ in Theorem \ref{iso}:
For a partial diagram $v\in V_{r,s}^{l}$,  an $(r_1,s_1)$-diagram $D_1$ and
 an $(r_2,s_2)$-diagram $D_2$, define
 $$ (D_1 \o D_2).v:= \bar{g}(D_1 \o D_2).v=(D_1 \boxtimes D_2).v$$

As a $B_{r_1,s_1}(\de)\o B_{r_2,s_2}(\de)$-module,
$V_{r,s}^{l}$ is isomorphic to  $\widetilde{V^{l}}_{r_1|s_1,r_2|s_2}$. In fact,
the map $f$ defined in the beginning of this section is clearly invertible, and
$$ f((D_1 \o D_2).v )=f((D_1 \boxtimes D_2).v)
=(D_1 \o D_2).\tilde{v}=(D_1 \o D_2).f(v)$$

\smallskip

\subsection{Filtration}
First of all,
we introduce a schematic manner to express partial diagrams:
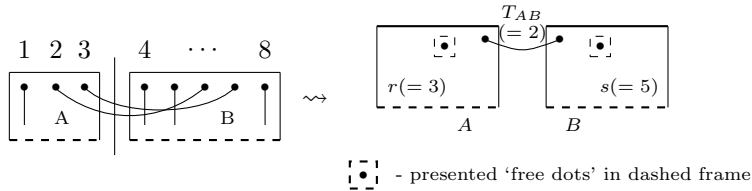
\begin{figure}[H]
	\begin{equation*}
	\begin{array}{c}
	\begin{tikzpicture}
	\node at (0,1.4) {$1$};
	\node at (0.8,1.4) {$3$};
	\node at (0.42,1.4) {$2$};
	\node at (1.6,1.4) {$4$};
	\node at (2.4,1.4) {$\cdots$};
	\node at (3.2,1.4) {$8$};
	\node at (0,0.9) {\tiny\textbullet};
	\node at (0.42,0.9) {\tiny\textbullet};
	\node at (0.8,0.9) {\tiny\textbullet};
	
	\node at (1.6,0.9) {\tiny\textbullet};
	\draw (1.2,1.3) --(1.2,0);
	
	\node at (2.0,0.9) {\tiny\textbullet};
	
	\node at (2.4,0.9) {\tiny\textbullet};
	
	\node at (2.8,0.9) {\tiny\textbullet};
	
	\node at (3.2,0.9) {\tiny\textbullet};
	\draw (0,0.4) --(0,0.9);
	\draw (1.6,0.9) --(1.6,0.4);
	\draw (2,0.9) --(2,0.4);
	\draw (3.2,0.9) --(3.2,0.4);
	\draw (0.4,0.9) .. controls (1,0.4) and (1.8,0.4) .. (2.4,0.9);
	\draw (0.8,0.9) .. controls (1.2,0.5) and (2.4,0.5) .. (2.8,0.9);
	\draw (1,1.1) --(-0.2,1.1);
	\draw (1,1.1) --(1,0.2);
	\draw (-0.2,1.1) --(-0.2,0.2);
	\draw [thick,dashed] (-0.2,0.2) --(1,0.2);
	\draw (1.4,1.1) --(1.4,0.2);
	\draw (1.4,1.1) --(3.4,1.1);
	\draw (3.4,1.1) --(3.4,0.2);
	\draw [thick,dashed](1.4,0.2) --(3.4,0.2);
	\node at (2.7,0.5) {\scriptsize B};
	\node at (0.5,0.5) {\scriptsize A};
	\end{tikzpicture}
	\end{array}
	\rightsquigarrow
	\begin{array}{c}
	\begin{tikzpicture}[scale=0.9]
	
	\node at (0.6,0.3) {\scriptsize$r(=3)$};
	\draw[thick] (0,1.2) --(1.8,1.2);
	\draw (0,0) --(0,1.2);
	\draw (1.8,0) --(1.8,1.2);
	\node at (1.3,-0.25) {\scriptsize$A$};
	\draw[thick,dashed]  (0,0) -- (1.8,0);
	\node at (3.73,0.3) {\scriptsize$s (=5)$};
	\draw[thick] (2.5,1.2) --(4.3,1.2);
	\draw (2.5,0) --(2.5,1.2);
	\draw (4.3,0) --(4.3,1.2);
	\node at (2.9,-0.25) {\scriptsize$B$};
	\draw[thick,dashed]  (2.5,0) -- (4.3,0);
	\node at (1.6,1) {\tiny\textbullet};
	\node at (1,0.9) {\tiny\textbullet};
	\draw[<->,dashed] (0.85,0.75) rectangle (1.15,1.05);
	\node at (2.7,1) {\tiny\textbullet};
	\node at (3.3,0.9) {\tiny\textbullet};
	\draw[<->,dashed] (3.15,0.75) rectangle (3.45,1.05);
	\draw (1.6,1) ..controls (2.15,0.8)..(2.7,1);
	\node at (2.15,1.4) {\scriptsize $T_{AB}$};
	\node at (2.15,1.1) {\scriptsize $(=2)$};
	\draw[thick,<->,dashed] (-0.4,-1.2) rectangle (0,-0.8);
	\node at (-0.2,-1) {\tiny\textbullet};
		\node at (2.9,-1) {\scriptsize- presented `free dots' in dashed frame};
	\end{tikzpicture}
	\end{array}
	\end{equation*}
	\vskip -0.3cm
	\caption{Schematic presentation}
	\label{fig:simple}
\end{figure}
Here, in the schematic presentation, `free dots' (that are not vertices of arcs)
are contained in a dashed frame with a single dot, and arcs crossing the wall become a single
one with number $T_{AB}$ ($=l$) on it. For example, changing Fig. \ref{moving} to the schematic presentation, as shown in Fig. \ref{schemoving},
the left diagram is acted by twisted product $B_{r_1,s_1}(\de)\boxtimes B_{r_2,s_2}(\de)$,
and the right one is acted by algebra $B_{r_1,s_1}(\de)\o B_{r_2,s_2}(\de)$.
 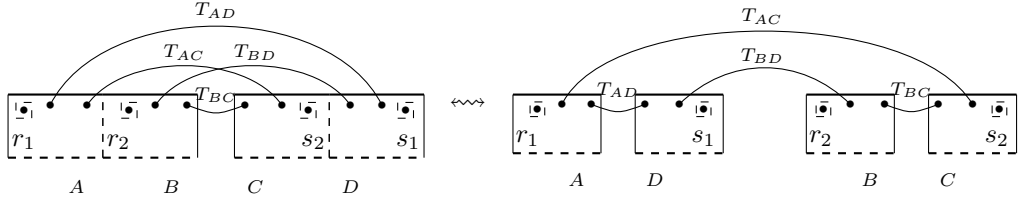
\begin{figure}[H]
 	\begin{equation*}
 	\begin{array}{c}
 	\begin{tikzpicture}[scale=0.7]
 	\node at (-1.5,0.3) {$r_1$};
 	\draw[thick] (-1.8,1.2) --(0,1.2);
 	\draw (-1.8,0) --(-1.8,1.2);
 	\draw[dashed] (0,0) --(0,1.2);
 	\node at (-0.5,-0.55) {\scriptsize$A$};
 	\draw[thick,dashed]  (-1.8,0)-- (0,0);
 	
 	\node at (0.3,0.3) {$r_2$};
 	\draw[thick] (0,1.2) --(1.8,1.2);
 	\draw[dashed] (0,0) --(0,1.2);
 	\draw (1.8,0) --(1.8,1.2);
 	\node at (1.3,-0.55) {\scriptsize$B$};
 	\draw[thick,dashed]  (0,0) -- (1.8,0);
 	\node at (4,0.3) {$s_2$};
 	\draw[thick] (2.5,1.2) --(4.3,1.2);
 	\draw (2.5,0) --(2.5,1.2);
 	\draw[dashed] (4.3,0) --(4.3,1.2);
 	\node at (2.9,-0.55) {\scriptsize$C$};
 	\draw[thick,dashed]  (2.5,0) -- (4.3,0);
 	\node at (5.8,0.3) {$s_1$};
 	\draw[thick] (4.3,1.2) --(6.1,1.2);
 	\draw[dashed] (4.3,0) --(4.3,1.2);
 	\draw (6.1,0) --(6.1,1.2);
 	\node at (4.7,-0.55) {\scriptsize$D$};
 	\draw[thick,dashed]  (4.3,0) --(6.1,0);
 	\node at (1.6,1) {\tiny\textbullet};
 	\node at (2.7,1) {\tiny\textbullet};
 	\draw (1.6,1) ..controls (2.15,0.8)..(2.7,1);
 	\node at (2.15,1.2) {\scriptsize$T_{BC}$};
 		\node at (0.5,0.9) {\tiny\textbullet};
 	\draw[<->,dashed] (0.35,0.75) rectangle (0.65,1.05);
 	\node at (-1,1) {\tiny\textbullet};
 		\node at (-1.5,0.9) {\tiny\textbullet};
 	\draw[<->,dashed] (-1.65,0.75) rectangle (-1.35,1.05);
 	\node at (5.3,1) {\tiny\textbullet};
 	\draw (-1,1) ..controls (0,3) and (4.3,3)..(5.3,1);
 	\node at (2.15,2.8) {\scriptsize$T_{AD}$};
 	\node at (-0.3,1) {\tiny\textbullet};
 	\node at (3.4,1) {\tiny\textbullet};
 		\node at (3.9,0.9) {\tiny\textbullet};
 	\draw[<->,dashed] (3.75,0.75) rectangle (4.05,1.05);
 	\draw (-0.3,1) ..controls (0.4,2) and (2.7,2)..(3.4,1);
 	\node at (1.55,2) {\scriptsize$T_{AC}$};
 	\node at (1,1) {\tiny\textbullet};
 	\node at (4.7,1) {\tiny\textbullet};
 	\node at (5.75,0.9) {\tiny\textbullet};
 	\draw[<->,dashed] (5.6,0.75) rectangle (5.9,1.05);
 	\draw (1,1) ..controls (1.7,2) and (4,2)..(4.7,1);
 	\node at (2.9,2) {\scriptsize$T_{BD}$};
 	\end{tikzpicture}
 	\end{array}
 	\leftrightsquigarrow
 	\begin{array}{c}
 	\begin{tikzpicture}[scale=0.65]
 	
 	\node at (0.3,0.3) {$r_1$};
 	\draw[thick] (0,1.2) --(1.8,1.2);
 	\draw (0,0) --(0,1.2);
 	\draw (1.8,0) --(1.8,1.2);
 	\node at (1.3,-0.55) {\scriptsize$A$};
 	\draw[thick,dashed]  (0,0) --(1.8,0);
 	\node at (3.9,0.3) {$s_1$};
 	\draw[thick] (2.5,1.2) --(4.3,1.2);
 	\draw (2.5,0) --(2.5,1.2);
 	\draw (4.3,0) --(4.3,1.2);
 	\node at (2.9,-0.55) {\scriptsize$D$};
 	\draw[thick,dashed]  (2.5,0) --(4.3,0);

 	\node at (6.3,0.3) {$r_2$};
 	\draw[thick] (6,1.2) --(7.8,1.2);
 	\draw (6,0) --(6,1.2);
 	\draw (7.8,0) --(7.8,1.2);
 	\node at (7.3,-0.55) {\scriptsize$B$};
 	\draw[thick,dashed]  (6,0)-- (7.8,0);
 	\node at (9.9,0.3) {$s_2$};
 	\draw[thick] (8.5,1.2) --(10.3,1.2);
 	\draw (8.5,0) --(8.5,1.2);
 	\draw (10.3,0) --(10.3,1.2);
 	\node at (8.9,-.55) {\scriptsize$C$};
 	\draw[thick,dashed]  (8.5,0)-- (10.3,0);
 	
 	\node at (1.6,1) {\tiny\textbullet};
 		\node at (0.5,0.9) {\tiny\textbullet};
 	\draw[<->,dashed] (0.35,0.75) rectangle (0.65,1.05);
 	\node at (2.7,1) {\tiny\textbullet};
 	\draw (1.6,1) ..controls (2.15,0.8)..(2.7,1);
 	\node at (2.15,1.3) {\scriptsize$T_{AD}$};
 	
 	\node at (7.6,1) {\tiny\textbullet};
 		\node at (6.35,0.9) {\tiny\textbullet};
 	\draw[<->,dashed] (6.2,0.75) rectangle (6.5,1.05);
 	\node at (8.7,1) {\tiny\textbullet};
 	\draw (7.6,1) ..controls (8.15,0.8)..(8.7,1);
 	\node at (8.15,1.3) {\scriptsize$T_{BC}$};
 	
 	\node at (1,1) {\tiny\textbullet};
 	\node at (9.4,1) {\tiny\textbullet};
 		\node at (9.95,0.9) {\tiny\textbullet};
 	\draw[<->,dashed] (9.8,0.75) rectangle (10.1,1.05);
 	\draw (1,1) ..controls (2,3) and (8.2,3)..(9.4,1);
 	\node at (5.1,2.8) {\scriptsize$T_{AC}$};
 	
 	\node at (3.4,1) {\tiny\textbullet};
 		\node at (3.9,0.9) {\tiny\textbullet};
 	\draw[<->,dashed] (3.75,0.75) rectangle (4.05,1.05);
 	\node at (6.9,1) {\tiny\textbullet};
 	\draw (3.4,1) ..controls (4.4,2) and (5.9,2)..(6.9,1);
 	\node at (5.1,2) {\scriptsize$T_{BD}$};
 	
 	\end{tikzpicture}
 	\end{array}
 	\end{equation*}
 	\vskip -0.3cm
 	\caption{Schematic presentation of Fig. \ref{moving}}
 	\label{schemoving}
 \end{figure}

Denote by $I_{(r_1|s_1,r_2|s_2)}^l$ the index set of
all tuples  $(T_{AC},T_{BD},T_{AD},T_{BC})$ of $(r_1|s_1,r_2|s_2)$-double-
walled diagrams with $l$ arcs (equally, the partial one-row $(r, s,l)$ diagrams), and
endow $I_{(r_1|s_1,r_2|s_2)}^l$ with a linear order by ordered lexicographically
with $T_{AC},T_{BD}$ using the order of the natural numbers, while $T_{AD},T_{BC}$ using the inverse order
of the natural numbers.

Let $r_1 + r_2=r$ and $s_1 + s_2=s$.
Suppose that $\textbf{t}=(T_{AC},T_{BD},T_{AD},T_{BC})$ is a possible tuple of an $(r_1|s_1,r_2|s_2)$-double-walled diagram
with $$T_{AC}+T_{BD}+T_{AD}+T_{BC}=l.$$ Define $\widetilde{V}_{(r_1|s_1,r_2|s_2)}^{\leq\textbf{t}}$ to
be the space spanned by all $(r_1|s_1,r_2|s_2)$-double-walled diagrams with $l$ arcs
such that  the indices in $I_{(r_1|s_1,r_2|s_2)}^l$ no more than $\textbf{t}$.
It is clearly a  $B_{r_1,s_1}(\de)\o B_{r_2,s_2}(\de)$-module.

\begin{defin}\label{wall}
The module $\widetilde{V}_{(r_1|s_1,r_2|s_2)}^{\leq\textbf{t}}$ is called an
$(r_1|s_1,r_2|s_2)$-double-walled module with the tuple $\textbf{t}$.
Moreover, let	$\lambda^L\vdash r-l, \lambda^R\vdash s-l$
and  $$W_{(r_1|s_1,r_2|s_2)}^{(\la^L,\la^R)}(\textbf{t}):=
\widetilde{V}_{(r_1|s_1,r_2|s_2) }^{\leq\textbf{t}} \o (S^{\la^L}\boxtimes S^{\la^R}),$$
which will be called a double-walled module with pair $(\la^L, \la^R)$.
\end{defin}

\noindent\textbf{Remarks}
(1)~~  Since each partial one-row  diagram can be viewed as a double walled diagram,
we replace partial one-row  diagrams with double walled diagrams and consequently use double-walled modules in our study.

\smallskip

(2)~~The module $W_{(r_1|s_1,r_2|s_2)}^{(\la^L,\la^R)}(\textbf{t})$ is a submodule of the restriction module
$$\res_{\scriptsize{B_{r_1,s_1}\o B_{r_2,s_2}}}^{\scriptsize{ B_{r,s}}}(\Delta_{r,s}(\la^L,\la^R)).$$
The fact that  $W_{(r_1|s_1,r_2|s_2)}^{(\la^L,\la^R)}(\textbf{t})$  is
actually a $(\scriptsize{B_{r_1, s_1}\o B_{r_2, s_2}})$-module is a direct corollary of Lemma \ref{updown}.

\smallskip

According to Definition \ref{wall}, we have a chain of modules indexed by $I_{(r_1|s_1,r_2|s_2)}^l$:

\begin{small}
$$0\subset \cdots \subset W_{(r_1|s_1,r_2|s_2)}^{(\la^L,\la^R)}
(\textbf{t}^\prime)\subset W_{(r_1|s_1,r_2|s_2)}^{(\la^L,\la^R)}(\textbf{t})\subset
\cdots \subset
\res_{\scriptsize{B_{r_1, s_1}\o B_{r_2,s_2}}}^{\scriptsize{ B_{r,s}}}(\Delta_{r,s}(\la^L,\la^R)). \eqno(*)$$
\end{small}

Now we are in a position to give the main result of this section,
which play an important role in the calculation shown in the next section.

\begin{thm}\label{keyp} Keep notations as above. Then
\begin{enumerate}
\item[\rm (1)]	Let $\De_{(r_1|s_1,r_2|s_2)}^{(\la^L,\la^R)}(\textbf{t})$ be a subquotient of the chain $(*)$. Then
	$$\De_{(r_1|s_1,r_2|s_2)}^{(\la^L,\la^R)}(\textbf{t}) \simeq
\widetilde{V}_{(r_1|s_1,r_2|s_2) }^{\textbf{t}} \o (S^{\la^L}\boxtimes S^{\la^R}),$$
where the space $\widetilde{V}_{(r_1|s_1,r_2|s_2)}^{\textbf{t}}$ is spanned by all
$(r_1|s_1,r_2|s_2)$-double walled diagrams with index   $\textbf{t}$.

\smallskip

\item[\rm (2)] Let $\lambda^L\vdash r-l$ and  $\lambda^R\vdash s-l$.	
Then, as a $(\scriptsize{B_{r_1,s_1}\o B_{r_2,s_2}})$-module,
	each summand of  $\De_{(r_1|s_1,r_2|s_2)}^{(\la^L,\la^R)}(\textbf{t})$ is filtered by cell modules of the form
	$$	
	\begin{array}{ll}
		V_{r_1, s_1}^{T_{AD}}\o \Big (~ \ind(\,S^{\la_{L}^1}\o S^{\mu_{1}}\,) \,
\boxtimes \, \ind(\,S^{\la_{R}^2}\o S^{\mu_{2}}\,)~\Big ) & \text{(left)} 		 \vspace{0.1cm} \\	\hspace{5cm}
	\bigotimes\\
		\vspace{0.8cm} V_{r_2, s_2}^{T_{BC}}\o \Big (~\ind(\,S^{\la_{L}^{2}}\o S^{\mu_{2}}\,) \,
\boxtimes \, \ind(\,S^{\la_{R}^{1}}\o S^{\mu_{1}}\,) ~\Big )& \text{(right)}
	\end{array},$$
	where
	\[
	\left\{
	\begin{array}{ll}
	\la_{L}^{1}\vdash ( r_1-T_{AD}-T_{AC}), &
	\mu_{1}\vdash T_{AC}, \\
	\la_{R}^{2}\vdash ( s_1-T_{AD}-T_{BD}),&\\
	\la_{L}^{2}\vdash ( r_2-T_{BC}-T_{BD}),&  \mu_{2}\vdash T_{BD},\\
   \la_{2}^{R}\vdash ( s_2-T_{BC}-T_{AC}).&
	\end{array}
	\right.
	\]
Here, $\ind (\,S^{\la}\o S^{\mu}\,)$
means an induction   from   $k\Sigma_{r}\o k\Sigma_{s} $
	to $k\Sigma_{r+s}$. Note that we will omit the subscripts of the functor $\ind$ if there is no dangerous of being ambiguous.

\smallskip

\item[\rm (3)] The  chain $(\ast)$ after being refined by {\rm (2)}
	is a composition series of the restricted module
$\res_{\scriptsize{B_{r_1, s_1}\o B_{r_2,s_2}}}^{\scriptsize{ B_{r,s}}}(\Delta_{r,s}(\la^L,\la^R))$.
\end{enumerate}	
\end{thm}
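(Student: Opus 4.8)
The plan is to establish the three parts in order, since each builds on the previous one. For part (1), I would show that the subquotient $W^{(\la^L,\la^R)}_{(r_1|s_1,r_2|s_2)}(\mathbf{t})/W^{(\la^L,\la^R)}_{(r_1|s_1,r_2|s_2)}(\mathbf{t}')$, where $\mathbf{t}'$ is the immediate predecessor of $\mathbf{t}$ in the linear order on $I^l_{(r_1|s_1,r_2|s_2)}$, is isomorphic as a $B_{r_1,s_1}(\de)\otimes B_{r_2,s_2}(\de)$-module to $\widetilde{V}^{\mathbf{t}}_{(r_1|s_1,r_2|s_2)}\otimes(S^{\la^L}\boxtimes S^{\la^R})$. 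The key input is Lemma~\ref{updown}: when the action does not increase the arc number $l$, the tuple $(T_{AC},T_{BD},T_{AD},T_{BC})$ can only move to a smaller-or-equal index, so $\widetilde{V}^{\le\mathbf{t}'}$ is an honest submodule and the diagrams with index exactly $\mathbf{t}$ descend to a basis of the quotient. On that quotient, any action that would strictly decrease $l$ is zero by definition of the cell-module action, and any action preserving $l$ but changing the tuple lands in the predecessor submodule, hence is zero in the quotient; so the induced action is exactly the action on $\widetilde{V}^{\mathbf{t}}\otimes(S^{\la^L}\boxtimes S^{\la^R})$. I would phrase this cleanly by passing through the ``up--down'' picture of Figure~\ref{bimodule}, where the action is literal diagram concatenation.

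For part (2), the idea is to analyze the fixed-tuple module $\widetilde{V}^{\mathbf{t}}_{(r_1|s_1,r_2|s_2)}\otimes(S^{\la^L}\boxtimes S^{\la^R})$ directly. A double-walled diagram with tuple $\mathbf{t}=(T_{AC},T_{BD},T_{AD},T_{BC})$ decomposes: the arcs joining $A$ to $D$ and $B$ to $C$ are ``internal'' to the two halves $B_{r_1,s_1}$ and $B_{r_2,s_2}$ respectively, while the $T_{AC}$ arcs from $A$ to $C$ and the $T_{BD}$ arcs from $B$ to $D$ are the ``cross'' arcs that get frozen once we restrict to $B_{r_1,s_1}\otimes B_{r_2,s_2}$ — their endpoints behave like free dots from the point of view of each factor's action, but they are linked across the two factors. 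The free dots on the $r_1$ side, after removing the $T_{AD}$ and $T_{AC}$ endpoints, number $r_1-T_{AD}-T_{AC}$ and carry a Specht module $S^{\la_L^1}$; the $T_{AC}$ cross-endpoints on the $A$ side carry $S^{\mu_1}$; similarly on the other three blocks, giving exactly the combinatorial constraints displayed in the theorem. Applying Proposition~\ref{aalc} to each factor converts the $\widetilde{V}^{T_{AD}}_{r_1,s_1}\otimes(-)$ part into a $B_{r_1,s_1}$-module whose symmetric-group data is $S^{\la_L^1}\otimes S^{\mu_1}$ on the left-of-wall dots (inducing up to $k\Sigma_{r_1-T_{AD}}$) and $S^{\la_R^2}\otimes S^{\mu_2}$ on the right-of-wall dots, and these induction products are precisely the $\ind(S^{\la_L^1}\otimes S^{\mu_1})$ and $\ind(S^{\la_R^2}\otimes S^{\mu_2})$ appearing in the statement. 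The filtration by cell modules of this tensor-product-of-cell-modules shape then comes from the standard fact that an induced module $\ind_{k\Sigma_a\otimes k\Sigma_b}^{k\Sigma_{a+b}}(S^\alpha\otimes S^\beta)$ is Specht-filtered, together with the corresponding statement for the $V^l_{r,s}\otimes(-)$ construction in the cellular structure of the walled Brauer algebra recalled in Section~\ref{sec: definition}.

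For part (3), I would combine (1) and (2) with a dimension (or, in the semisimple setting, a composition-factor) count. Since we assume the walled Brauer algebras are semisimple — which is exactly the hypothesis under which the Grothendieck ring multiplication is well-defined, as noted after Theorem~\ref{iso} — each cell module is simple and ``filtered by cell modules'' means a genuine direct sum decomposition; one then checks that the multiset of cell modules produced by refining $(*)$ according to (2) exhausts $\res^{B_{r,s}}_{B_{r_1,s_1}\otimes B_{r_2,s_2}}(\Delta_{r,s}(\la^L,\la^R))$, with no repetitions and nothing missing, so the refined chain is a composition series. The bookkeeping is a matter of summing over all tuples $\mathbf{t}\in I^l_{(r_1|s_1,r_2|s_2)}$ and all admissible partition data, matching the total against $\dim\res(\Delta_{r,s}(\la^L,\la^R))$, which factors as (number of $(r,s,l)$-half diagrams) times $\dim S^{\la^L}\cdot\dim S^{\la^R}$.

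The main obstacle I expect is in part (2): correctly identifying, combinatorially, how the space of fixed-tuple double-walled diagrams factors as a tensor product over the two algebras, and in particular bookkeeping which dots are ``free'' for which factor versus which are cross-arc endpoints shared between the factors. Getting the partition-size constraints ($\la_L^1\vdash r_1-T_{AD}-T_{AC}$, $\mu_1\vdash T_{AC}$, etc.) to come out exactly right, and then correctly threading the two applications of Proposition~\ref{aalc} so that the $\ind$ of a pair of Specht modules appears in the stated positions, is the delicate step; everything else is either a direct appeal to Lemma~\ref{updown} or standard cellular/Specht-filtration input.
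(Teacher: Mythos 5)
Your proposal follows essentially the same route as the paper: part (1) from the definition of the double-walled filtration together with Lemma \ref{updown}, part (2) by splitting each fixed-tuple layer into internal arcs ($AD$, $BC$), cross arcs ($AC$, $BD$) and free dots — which is exactly the paper's ``general case'', organized there as two warm-up special cases (the pure cross-arc case, where the layer is the regular $k(\Sigma_{T_{AC}}\times\Sigma_{T_{BD}})$-bimodule filtered by $(S^{\mu_1}\boxtimes S^{\mu_2})\otimes(S^{\mu_2}\boxtimes S^{\mu_1})$, and the pure Littlewood--Richardson case) before combining them — and part (3) as an immediate consequence of (2). The one substantive point you leave implicit is that regular-bimodule decomposition of the cross-arc configurations, which is what forces the same $\mu_i$ to label both tensor factors; conversely, your dimension count in (3) and the appeal to Proposition \ref{aalc} in (2) are not needed, since the refinement of $(*)$ is already a filtration of the entire restricted module and the layers carry the cell-module form by construction.
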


\begin{proof}
The Statement (1) follows directly from the definition of walled modules, and
(3) can be obtained immediately once (2) has been proved since the chain $(\ast)$ is filtered by cell modules.
As a result, we only need to prove (2).
Let us first consider two special cases, which are important for dealing with the general case.

\smallskip

\textbf{Special case 1:}  $\mathbf{r_1=s_2=T_{AC}},$ \textbf{and} $\mathbf{r_2=s_1=T_{BD}}$

\smallskip

In this case, the index $\mathbf{t=(T_{AC},T_{BD},T_{AD},T_{BC})=(r_1,r_2,0,0)}$.
Given a double walled diagram $\tilde{v}$
with index $(r_1,r_2,0,0)$, it can be identified with two permutations
$\sigma \in \Sigma_{T_{AC}}$ and $\sigma^{\prime} \in \Sigma_{T_{BD}}$
(see Fig.\ref{S11} for an example).

\begin{figure}[H]
	\begin{equation*}
		\ytableausetup{boxsize=0.8em}
	\begin{array}{c}
\begin{tikzpicture}
\node at (-0.4,0.9) {\tiny\textbullet};
\node at (-0.4,1.2) {\tiny $1$};
\node at (-0.4,1.54) {\scriptsize $\sigma=(21)$};
\draw (-0.4,0.9) .. controls (0.1,-0.2) and (3.1,-0.20) .. (4,0.9);
\node at (0,0.9) {\tiny\textbullet};
\node at (0,1.2) {\tiny $2$};
\draw (0,0.9) .. controls (0.9,-0.15) and (3.9,-0.1) .. (4.4,0.9);
\draw [thick] (0.2,0) --(0.2,1.3);
\node at (0.4,0.9) {\tiny\textbullet};
\node at (0.4,1.24) {\tiny $1^{\prime}$};
\draw (0.4,0.9) .. controls (0.9,0.1) and (2.8,0.1) .. (3.2,0.9);
\node at (0.8,0.9) {\tiny\textbullet};
\node at (0.8,1.24) {\tiny $2^{\prime}$};
\node at (1.1,1.54) {\scriptsize $\sigma^{\prime}=(2^{\prime}3^{\prime}1^{\prime})$};
\draw (0.8,0.9) .. controls (1.2,0.3) and (2.4,0.3) .. (2.8,0.9);
\node at (1.2,0.9) {\tiny\textbullet};
\node at (1.2,1.24) {\tiny $3^{\prime}$};

\draw[thick] (3.8,1.3) --(3.8,0);
\draw [thick,dashed] (2,0) --(2,1.3);

\node at (2.8,0.9) {\tiny\textbullet};
\node at (2.8,1.24) {\tiny $3^{\prime}$};

\node at (3.2,0.9) {\tiny\textbullet};
\node at (3.2,1.24) {\tiny $2^{\prime}$};
\draw (1.2,0.9) .. controls (1.8,0.6) and (3,0.6) .. (3.6,0.9);
\node at (3.6,0.9) {\tiny\textbullet};	
\node at (3.6,1.24) {\tiny $1^{\prime}$};
\node at (4,0.9) {\tiny\textbullet};
\node at (4,1.2) {\tiny $2$};
\node at (4.4,0.9) {\tiny\textbullet};
\node at (4.4,1.2) {\tiny $1$};


\node at (0.7,0.1) {\scriptsize D};
\node at (-0.3,0.1) {\scriptsize A};
\node at (3.2,0.1) {\scriptsize B};
\node at (4.2,0.1) {\scriptsize C};
\end{tikzpicture}
\end{array}
\end{equation*}
\caption{Permutations determined by arcs}
\label{S11}
\end{figure}		

Then as a module over
$k(\Sigma_{T_{AC}}\times \Sigma_{T_{BD}})\times k(\Sigma_{T_{AC}}\times \Sigma_{T_{BD}})$,
the double walled module is isomorphic to $k(\Sigma_{T_{AC}}\times \Sigma_{T_{BD}})$,
where the action is given by
$$\big((g_1, g_1^{\prime}),(g_2, g_2^{\prime})\big)\cdot (\sigma, \sigma^{\prime})
=\big(g_1\sigma g_2^{-1}\,,\, g_1^{\prime}\sigma^{\prime} (g_2^{\prime})^{-1}\big)$$
for $g_1, g_2 \in \Sigma_{T_{AC}}$, and $g_1^{\prime}, g_2^{\prime} \in \Sigma_{T_{BD}} $.
Consequently, by using the cellular structure
of $k(\Sigma_{T_{AC}}\times \Sigma_{T_{BD}})$
the module has a filtration with subquotients
$$(S^{\mu_{1}}\boxtimes S^{\mu_{2}}) \o (S^{\mu_{2}}\boxtimes S^{\mu_{1}})$$
where $\mu_{1}\vdash T_{AC},\, \mu_{2}\vdash T_{BD}$. Clearly, the action of each element of
$\scriptsize{B_{r_1,s_1}\o B_{r_2,s_2}}$ containing at least one arc on subquotients is zero.
This implies that as a $\scriptsize{B_{r_1,s_1}\o B_{r_2,s_2}}$-module,
$\De_{(r_1|s_1,r_2|s_2)}^{(\la^L,\la^R)}(\textbf{t})$ is filtered by modules of the form:
$$
\begin{array}{ll}
\big(V^{0}_{r_1,s_1}\o (S^{\mu_{1}}\boxtimes S^{\mu_{2}}) \big)\, \bigotimes\,
\big(V^{0}_{r_2,s_2}\o (S^{\mu_{1}}\boxtimes S^{\mu_{2}}) \big).
\end{array}$$

We draw a schematic diagram to  help comprehend the case ( see Fig. \ref{S1}).
\begin{figure}[t]
\begin{equation*}
\begin{array}{c}
\begin{tikzpicture}[scale=0.65]
\node at (0.3,0.9) {$r_1$};
\draw[thick] (0,1.2) --(1.8,1.2);
\draw (0,0) --(0,1.2);
\draw (1.8,0) --(1.8,1.2);
\node at (1.3,0.55) {\scriptsize$A$};
\draw[thick,dashed]  (0,0) --(1.8,0);
\node at (3.9,0.9) {$s_2$};
\draw[thick] (2.5,1.2) --(4.3,1.2);
\draw (2.5,0) --(2.5,1.2);
\draw (4.3,0) --(4.3,1.2);
\node at (2.9,0.55) {\scriptsize$D$};
\draw[thick,dashed]  (2.5,0)-- (4.3,0);

\node at (6.3,0.9) {$r_2$};
\draw[thick] (6,1.2) --(7.8,1.2);
\draw (6,0) --(6,1.2);
\draw (7.8,0) --(7.8,1.2);
\node at (7.3,0.55) {\scriptsize$B$};
\draw[thick,dashed]  (6,0) --(7.8,0);
\node at (9.9,0.9) {$s_1$};
\draw[thick] (8.5,1.2) --(10.3,1.2);
\draw (8.5,0) --(8.5,1.2);
\draw (10.3,0) --(10.3,1.2);
\node at (8.9,.55) {\scriptsize$C$};
\draw[thick,dashed]  (8.5,0) -- (10.3,0);

	\draw [thick] (2.18,-0.2) --(2.18,1.4);
	\draw [thick] (8.18,-0.2) --(8.18,1.4);
\node at (0.8,1) {\tiny\textbullet};
\node at (9.4,1) {\tiny\textbullet};
\draw (0.8,1) ..controls (2,3) and (8.2,3)..(9.4,1);
\node at (5.1,2.85) {
$\Sigma_{T_{AC}}$
};
\node at (3.4,1) {\tiny\textbullet};
\node at (6.9,1) {\tiny\textbullet};
\draw (3.4,1) ..controls (4.4,1.5) and (5.9,1.5)..(6.9,1);
\node at (5.1,1.7) {$\Sigma_{T_{BD}}$};

\end{tikzpicture}
\end{array}
\small {\rightsquigarrow}
	\begin{array}{c}
	\begin{tikzpicture}[scale=0.65]
	
	\node at (0.3,0.9) {$r_1$};
	\draw[thick] (0,1.2) --(1.8,1.2);
	\draw (0,0) --(0,1.2);
	\draw (1.8,0) --(1.8,1.2);
	\node at (1,0.3) {\scriptsize$A$};
	\draw[thick,dashed]  (0,0) --(1.8,0);
	\node at (3.9,0.9) {$s_2$};
	\draw[thick] (2.5,1.2) --(4.3,1.2);
	\draw (2.5,0) --(2.5,1.2);
	\draw (4.3,0) --(4.3,1.2);
	
	\node at (3.3,0.3) {\scriptsize$D$};
	\draw[thick,dashed]  (2.5,0)-- (4.3,0);

	\node at (6.3,0.9) {$r_2$};
	\draw[thick] (6,1.2) --(7.8,1.2);
	\draw (6,0) --(6,1.2);
	\draw (7.8,0) --(7.8,1.2);
	\node at (7,0.3) {\scriptsize$B$};
	\draw[thick,dashed]  (6,0) --(7.8,0);
	\node at (9.9,0.9) {$s_1$};
	\draw[thick] (8.5,1.2) --(10.3,1.2);
	\draw (8.5,0) --(8.5,1.2);
	\draw (10.3,0) --(10.3,1.2);
	\node at (9.2,.3) {\scriptsize$C$};
	\draw[thick,dashed]  (8.5,0) -- (10.3,0);
	
	\draw [thick] (2.18,-0.2) --(2.18,1.4);
		\draw [thick] (8.18,-0.2) --(8.18,1.4);
	\node at (0.8,0.9) {\tiny\textbullet};
		\draw[<->,dashed] (0.8,0.9) circle (0.18);
	\node at (9.4,0.9) {\tiny\textbullet};
		\draw[<->,dashed] (9.4,0.9) circle (0.18);
		\draw (9.4,1) --(9.4,1.6);
			\node at (9.5,1.8) {\scriptsize$S^{\mu_{1}}$};
	\draw (0.8,1) --(0.8,1.6);
		\node at (0.9,1.8) {\scriptsize$S^{\mu_{1}}$};
	
	\node at (3.4,0.9) {\tiny\textbullet};
	\draw[<->,dashed] (3.4,0.9) circle (0.18);
		\draw (3.4,1) --(3.4,1.6);
			\node at (3.5,1.8) {\scriptsize$S^{\mu_{2}}$};
	\node at (6.9,0.9) {\tiny\textbullet};
		\draw[<->,dashed] (6.9,0.9) circle (0.18);
		\draw (6.9,1) --(6.9,1.6);
			\node at (7,1.8) {\scriptsize$S^{\mu_{2}}$};
	\draw [thick,dashed] (5,0) --(5,1.3);
		\node at (0.8,3.6) {};
			\node at (3.92,-0.77) {{\tiny\textbullet} - new free dots  in  restriction};
			\draw[<->,dashed] (0.46,-0.845) circle (0.18);
	\end{tikzpicture}
	\end{array}
	\end{equation*}
	\caption{Demonstrations for restriction of Special case 1}
	\label{S1}
\end{figure}
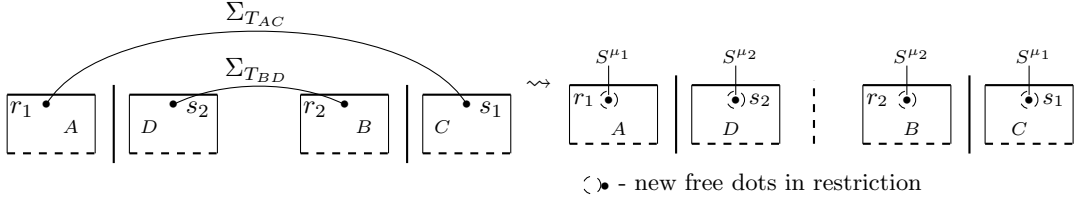

\textbf{Special case 2:} $\mathbf{T_{AC}=0,T_{BD}=0}$

\smallskip

Note that $S^{\la_L}\boxtimes S^{\la_R}$, in the double walled module
$\widetilde{V}_{(r_1|s_1,r_2|s_2) }^{\textbf{t}} \o (S^{\la_L}\boxtimes S^{\la_R})$,
corresponds to the free dots of diagrams.
This is shown in
Fig. \ref{S2} (a) with $S^{\la_L}$ corresponding to the free dots of block $A$ jointed $B$ as  $S^{\la_R}$ corresponding to free dots of block $C$ jointed $D$. Then $\la_L\vdash r-T_{AD}-T_{BC}, \la_R\vdash s-T_{AD}-T_{BC}$.
Therefore, from the characteristic-free version of the Littlewood-Richardson Rule [13], we have
that the restricted module $S^{\la_L}\downarrow^{\Sigma_{r-T_{AD}-T_{BC}}}_{\Sigma_{r_1-T_{AD} }\,\times\, \Sigma_{r_2-T_{BC} }}$
has a filtration with subquotients  $S^{\la_L^1}\boxtimes S^{\la_L^2}$, where $S^{\la_L^1}$ and $S^{\la_L^2}$ are shown  in
Fig.\ref{S2} (b). Let us give some explain here. Module
$S^{\la_L^1}$ corresponds to the free dots of block $A$ on the left side of the wall
with $\la_L^1\vdash r_1-T_{AD}$, as $S^{\la_L^2}$ corresponds to the
right free dots of block $B$ with $ \la_L^2\vdash r_2-T_{BC}$.
Similarly, we can study the restriction of $S^{\la_R}$.
Note that any diagram of $\scriptsize{B_{r_1,s_1}}$ with strictly more
than $T_{AD}$ arcs will kill the summand, as does any diagram
of $\scriptsize{B_{r_2,s_2}}$ with strictly more than $T_{BC}$ arcs.
Hence, as a $\scriptsize{B_{r_1,s_1}\o B_{r_2,s_2}}$-module,
$\De_{(r_1|s_1,r_2|s_2)}^{(\la^L,\la^R)}(\textbf{t})$ is filtered by modules of the form:
$$
\begin{array}{ll}
\big(V^{T_{AD}}_{r_1,s_1}\o (S^{\la_{L}^1}\boxtimes S^{\la_{R}^{2}}) \big)\,
\bigotimes\, \big(V^{T_{BC}}_{r_2,s_2}\o (S^{\la_{L}^2}\boxtimes S^{\la_{R}^{1}}) \big).
\end{array}$$

\begin{figure}[H]
	\begin{equation*}
	\begin{array}{c}
	\begin{tikzpicture}[scale=0.7]
	\node at (-1.5,0.3) {$r_1$};
	\draw[thick] (-1.8,1.2) --(0,1.2);
	\draw (-1.8,0) --(-1.8,1.2);
	\draw[dashed] (0,0) --(0,1.2);
	\node at (-0.8,-0.4) {\scriptsize$A$};
	\draw[thick,dashed]  (-1.8,0)-- (0,0);
	
	\node at (0.3,0.9) {\tiny\textbullet};
	\draw[thick] (0,1.2) --(1.8,1.2);
	\draw[dashed] (0,0) --(0,1.2);
	\draw (1.8,0) --(1.8,1.2);
	\node at (1,-0.4) {\scriptsize$B$};
	\draw[thick,dashed]  (0,0) -- (1.8,0);
	\node at (4,0.9) {\tiny\textbullet};
	\draw[thick] (2.5,1.2) --(4.3,1.2);
	\draw (2.5,0) --(2.5,1.2);
	\draw[dashed] (4.3,0) --(4.3,1.2);
	\node at (3.4,-0.4) {\scriptsize$C$};
	\draw[thick,dashed]  (2.5,0) -- (4.3,0);
	\node at (5.8,0.3) {$s_2$};
	\draw[thick] (4.3,1.2) --(6.1,1.2);
	\draw[dashed] (4.3,0) --(4.3,1.2);
	\draw (6.1,0) --(6.1,1.2);
	\node at (5.1,-0.4) {\scriptsize$D$};
	\draw[thick,dashed]  (4.3,0) --(6.1,0);
	\node at (1.6,1) {\tiny\textbullet};
	\node at (2.7,1) {\tiny\textbullet};
	\draw (1.6,1) ..controls (2.15,0.8)..(2.7,1);
	\node at (2.15,1.4) {\scriptsize$T_{BC}$};
		\draw[thick]  (2.15,0)-- (2.15,1.2);
	\node at (-1,1) {\tiny\textbullet};
	\node at (5.3,1) {\tiny\textbullet};
	\draw (-1,1) ..controls (-0.6,3.5) and (4.9,3.5)..(5.3,1);
	\node at (2.15,3.1) {\scriptsize$T_{AD}$};
	\node at (-0.3,0.9) {\tiny\textbullet};
	\node at (3,0.3) {$s_1$};
	\node at (1.3,0.3) {$r_2$};
	\node at (4.6,0.9) {\tiny\textbullet};
\draw[thick,<->,dashed] (-0.5,0.7) rectangle (-0.1,1.1);
\draw[thick,<->,dashed] (0.1,0.7) rectangle (0.5,1.1);
\draw (-0.3,0.9) -- (0,1.55);
\draw (0.3,0.9) -- (0,1.55);
\node at (0.1,1.8) {$S^{\la_{L}}$};
\draw[thick,<->,dashed] (3.8,0.7) rectangle (4.2 ,1.1);
\draw[thick,<->,dashed] (4.4,0.7) rectangle (4.8 ,1.1);
\draw (4,0.9) -- (4.3,1.55);
\draw (4.6,0.9) -- (4.3,1.55);
\node at (4.3,1.8) {$S^{\la_{R}}$};
\node at (2.2,-1.1) { \small \mbox{(a) partial diagram module over $\scriptsize{B_{r,s}}$ }};
	\end{tikzpicture}
	\end{array}
\rightsquigarrow
	\begin{array}{c}
	\begin{tikzpicture}[scale=0.65]
	
	\node at (0.3,0.3) {$r_1$};
	\draw[thick] (0,1.2) --(1.8,1.2);
	\draw (0,0) --(0,1.2);
	\draw (1.8,0) --(1.8,1.2);
	\node at (1,-0.4) {\scriptsize$A$};
	\draw[thick,dashed]  (0,0) --(1.8,0);
	\node at (3.9,0.3) {$s_2$};
	\draw[thick] (2.5,1.2) --(4.3,1.2);
	\draw (2.5,0) --(2.5,1.2);
	\draw (4.3,0) --(4.3,1.2);
	\node at (3.3,-0.4) {\scriptsize$D$};
	\draw[thick,dashed]  (2.5,0) --(4.3,0);

	\node at (6.3,0.3) {$r_2$};
	\draw[thick] (6,1.2) --(7.8,1.2);
	\draw (6,0) --(6,1.2);
	\draw (7.8,0) --(7.8,1.2);
	\node at (7,-0.4) {\scriptsize$B$};
	\draw[thick,dashed]  (6,0)-- (7.8,0);
	\node at (9.9,0.3) {$s_1$};
	\draw[thick] (8.5,1.2) --(10.3,1.2);
	\draw (8.5,0) --(8.5,1.2);
	\draw (10.3,0) --(10.3,1.2);
	\node at (9.4,-.4) {\scriptsize$C$};
	\draw[thick,dashed]  (8.5,0)-- (10.3,0);
	
	\node at (1.6,1) {\tiny\textbullet};
	\node at (2.7,1) {\tiny\textbullet};
	\draw (1.6,1) ..controls (2.15,0.8)..(2.7,1);
	\node at (2.15,1.4) {\scriptsize$T_{AD}$};
		\draw[thick]  (2.15,0)-- (2.15,1.2);
	\node at (7.6,1) {\tiny\textbullet};
	\node at (8.7,1) {\tiny\textbullet};
	\draw (7.6,1) ..controls (8.15,0.8)..(8.7,1);
	\node at (8.15,1.4) {\scriptsize$T_{BC}$};
	\draw[thick]  (8.15,0)-- (8.15,1.2);
	\node at (0.8,0.9) {\tiny\textbullet};
	\draw[thick,<->,dashed] (0.6,0.7) rectangle (1,1.1);
		\draw (0.8,1) -- (0.8,1.55);
		\node at (1.1,1.9) {$S^{\la_{L}^{1}}$};
	\node at (9.4,0.9) {\tiny\textbullet};
		\draw[thick,<->,dashed] (9.2,0.7) rectangle (9.6,1.1);
			\draw (9.4,1) -- (9.4,1.55);
		\node at (9.4,1.9) {$S^{\la_{R}^{1}}$};
	
	\node at (3.4,0.9) {\tiny\textbullet};
		\draw[thick,<->,dashed] (3.2,0.7) rectangle (3.6,1.1);
			\draw (3.4,1) -- (3.4,1.55);
		\node at (3.4,1.9) {$S^{\la_{R}^{2}}$};
	\node at (6.9,0.9) {\tiny\textbullet};
		\draw[thick,<->,dashed] (6.7,0.7) rectangle (7.1,1.1);
			\draw (6.9,1) -- (6.9,1.55);
		\node at (6.9,1.9) {$S^{\la_{L}^{2}}$};
\draw[thick,dashed]  (5.1,0) --(5.1,1.3);
		\node at (5.3,-1.4) { \small \mbox{(b)  double walled module over $\scriptsize{B_{r_1,s_1}\o B_{r_2,s_2}}$ }};
		\node at (5,3.33) {};
	\end{tikzpicture}
	\end{array}
	\end{equation*}
	\vskip -0.3cm
	\caption{Demonstration for restriction of Special case 2}
	\label{S2}
\end{figure}
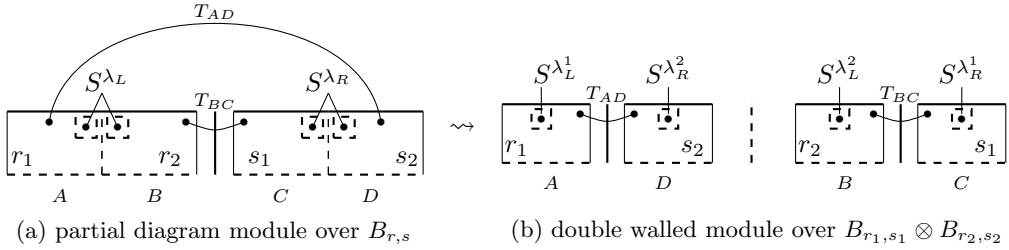
\textbf{General case.}
Given a double-walled diagram $\tilde{v} $ with index $(T_{AC},T_{BD},T_{AD},T_{BC})$,
it can be divided into three different parts:
\begin{enumerate}
\item[1)] the arcs jointing  blocks $AD$ or $BC$;

\item[2)] the arcs jointing blocks $AC$ or $BD$;

\item[3)] and free dots (jointing no arcs).
\end{enumerate}
As our study of the special cases above,  when we  consider restricting to
$\scriptsize{B_{r_1,s_1}\o B_{r_2,s_2}}$,  part (1)
determine the half diagram module $V^{T_{AD}}_{r_1,s_1}$,
$V^{T_{BC}}_{r_2,s_2}$, respectively;
part (2) determine new free dots (with dashed circle in Fig.\ref{gene}) corresponding Specht modules
 $S^{\mu_{1}}, S^{\mu_{2}}$ as special case 1;  for part (3),  as special case 2,
old free dots  (with dashed frame) correspond  Specht modules $S^{\la_{L}^{1}}, S^{\la_{L}^{2}}, S^{\la_{R}^{1}}$ and $S^{\la_{R}^{2}}$. Note that if a block, say $A$, contains both $r_1-T_{AB}$ old and $T_{AC}$ new free dots, then the Specht modules corresponding the free dots should be a $k\Sigma_{(r_1-T_{AB})+T_{AC}}$ module,  which  is  the induction $\ind_{k\Sigma_{(r_1-T_{AB})}\times k\Sigma_{T_{AC}}}^{k\Sigma_{(r_1-T_{AB})+T_{AC}}}(\,S^{\la_{L}^1}\o S^{\mu_{1}}\,)$.
Then the double-walled module is filtered by modules of the form shown in the theorem,
as shown in  Fig. \ref{gene}.
\end{proof}
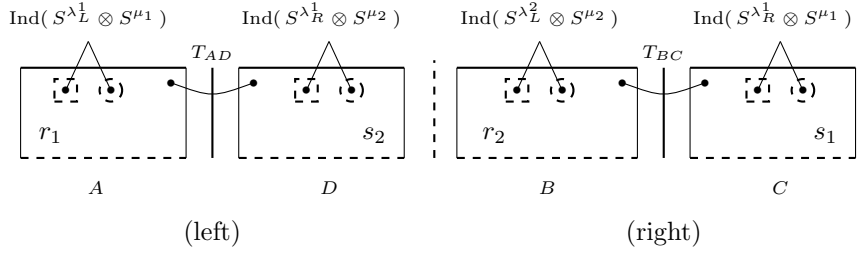
\begin{figure}[H]
	\begin{equation*}
	\begin{array}{c}
	\begin{tikzpicture}[scale=1]
	\node at (0,0.3) {$r_1$};
	\draw[thick] (-0.4,1.2) --(1.8,1.2);
	\draw (-0.4,0) --(-0.4,1.2);
	\draw (1.8,0) --(1.8,1.2);
	\node at (0.6,-0.4) {\scriptsize$A$};
	\draw[thick,dashed]  (-0.4,0) --(1.8,0);
	\node at (4.3,0.3) {$s_2$};
	\draw[thick] (2.5,1.2) --(4.7,1.2);
	\draw (2.5,0) --(2.5,1.2);
	\draw (4.7,0) --(4.7,1.2);
	\node at (3.7,-0.4) {\scriptsize$D$};
	\draw[thick,dashed]  (2.5,0) --(4.7,0);

	\node at (5.9,0.3) {$r_2$};
	\draw[thick] (5.4,1.2) --(7.8,1.2);
	\draw (5.4,0) --(5.4,1.2);
	\draw (7.8,0) --(7.8,1.2);
	\node at (6.6,-0.4) {\scriptsize$B$};
	\draw[thick,dashed]  (5.4,0)-- (7.8,0);
	\node at (10.3,0.3) {$s_1$};
	\draw[thick] (8.5,1.2) --(10.7,1.2);
	\draw (8.5,0) --(8.5,1.2);
	\draw (10.7,0) --(10.7,1.2);
	\node at (9.7,-.4) {\scriptsize$C$};
	\draw[thick,dashed]  (8.5,0)-- (10.7,0);
	
	\node at (1.6,1) {\tiny\textbullet};
	\node at (2.7,1) {\tiny\textbullet};
	\draw (1.6,1) ..controls (2.15,0.8)..(2.7,1);
	\node at (2.15,1.4) {\scriptsize$T_{AD}$};
		\node at (2.15,-1) {(left)};
	\draw[thick]  (2.15,0)-- (2.15,1.2);
	\node at (7.6,1) {\tiny\textbullet};
	\node at (8.7,1) {\tiny\textbullet};
	\draw (7.6,1) ..controls (8.15,0.8)..(8.7,1);
	\node at (8.15,1.4) {\scriptsize$T_{BC}$};
	\node at (8.15,-1) {(right)};
	\draw[thick]  (8.15,0)-- (8.15,1.2);
		\node at (6.2,0.9) {\tiny\textbullet};
	\draw[thick,<->,dashed] (6.05,0.75) rectangle (6.35,1.05);
	\node at (6.8,0.9) {\tiny\textbullet};
	\draw[thick,<->,dashed] (6.8,0.9) circle (0.15);
		\node at (6.5,1.9) {\scriptsize$\ind(\,S^{\la_{L}^2}\o S^{\mu_{2}}\,)$};
	\draw (6.2,0.9) -- (6.5,1.55);
	\draw (6.8,0.9) -- (6.5,1.55);
	\node at (0.8,0.9) {\tiny\textbullet};
\draw[thick,<->,dashed] (0.8,0.9) circle (0.15);

\node at (0.2,0.9) {\tiny\textbullet};
\draw[thick,<->,dashed] (0.05,0.75) rectangle (0.35,1.05);
		\node at (0.5,1.9) {\scriptsize$\ind(\,S^{\la_{L}^1}\o S^{\mu_{1}}\,)$};
	\draw (0.8,0.9) -- (0.5,1.55);
	\draw (0.2,0.9) -- (0.5,1.55);
	\node at (9.4,0.9) {\tiny\textbullet};
	\draw[thick,<->,dashed] (9.25,0.75) rectangle (9.55,1.05);
			\node at (10,0.9) {\tiny\textbullet};
	\draw[thick,<->,dashed] (10,0.9) circle (0.15);
	\node at (9.6,1.9) {\scriptsize$\ind(\,S^{\la_{R}^1}\o S^{\mu_{1}}\,)$};
	\draw (10,0.9) -- (9.7,1.55);
	\draw (9.4,0.9) -- (9.7,1.55);
	\node at (3.4,0.9) {\tiny\textbullet};
\draw[thick,<->,dashed] (3.25,0.75) rectangle (3.55,1.05);
\node at (4,0.9) {\tiny\textbullet};
\draw[thick,<->,dashed] (4,0.9) circle (0.15);
		\node at (3.6,1.9) {\scriptsize$\ind(\,S^{\la_{R}^1}\o S^{\mu_{2}}\,)$};
	\draw (3.4,0.9) -- (3.7,1.55);
	\draw (4,0.9) -- (3.7,1.55);
	\draw[thick,dashed]  (5.1,0) --(5.1,1.3);
	\end{tikzpicture}
	\end{array}
	\end{equation*}
	\vskip -0.3cm
	\caption{Demonstration of the general case}
	\label{gene}
\end{figure}

\section{Grothendieck ring}
In this section,  we shall calculate
the structure constants of the Grothendieck ring. Throughout this section,
all walled Brauer algebras are assumed to be semisimple.

It is well known that in semisimple cases,  all cell modules of  algebra  $B_{r,s}(\delta)$
form a complete set of nonisomorphic simple modules.
Let $\Delta_{r,s}(\la^L,\la^R)=V_{r,s}^{l}\o (S^{\la^L}\boxtimes S^{\la^R})$
be  a  cell module of algebra $B_{r,s}$ with 	$\lambda^L\vdash r-l,  \lambda^R\vdash s-l$.

Suppose $r_1, r_2, s_1, s_2 \in \mathbb{N}$  such that $r_1 + r_2=r$ and $s_1 + s_2=s$.
Let    $\Delta_{r_1,s_1}(\nu_1^L,\nu_1^R)$ and $\Delta_{r_2,s_2}(\nu_2^L,\nu_2^R)$ be
 cell modules of $B_{r_1,s_1}(\delta)$ and $B_{r_2,s_2}(\delta)$, respectively, with
	$\nu_1^L\vdash r_1-l_1$, 	$\nu_1^R\vdash s_1-l_1$,  $\nu_2^L\vdash r_2-l_2$,
and $\nu_2^R\vdash s_2-l_2$. Then all tensor products  $\Delta_{r_1,s_1}(\nu_1^L,\nu_1^R)\,\otimes\, \Delta_{r_2,s_2}(\nu_2^L,\nu_2^R)$
form a complete set of nonisomorphism simple modules
of the semisimple algebra $B_{r_1,s_1}(\delta) \o_k B_{r_2,s_2}(\delta)$.
Denote by $ C^{\nu}_{\la|\mu}$  the
Littlewood-Richardson coefficients, that is,
the multiplicities of the tensor product $S^{\la}\,\boxtimes\, S^{\mu}$
in restricted module $\res (S^{\nu})$. Consider $\Delta_{r,s}(\la^L,\la^R)$
as a $(\scriptsize{B_{r_1,s_1}\o B_{r_2,s_2}})$-module.

\begin{thm}\label{cons}Let
$\mathrm{B}(\delta)$ be a semi-simple sequence of the walled Brauer algebras with $\de\in k$.
Then we have
\begin{eqnarray}
[\Delta_{r_1,s_1}(\nu_1^L,\nu_1^R)]\cdot [\Delta_{r_2,s_2}(\nu_2^L,\nu_2^R)]	&=& \sum_{\vec{\lambda}}C_{\vec{\nu}_1|\vec{\nu}_2}^{\vec{\lambda}}[\Delta_{r,s}(\la^L,\la^R)],\nonumber
\end{eqnarray}
where $\vec{\nu}_i=(\nu_i^L,\nu_i^R)$ for $i=1,2$, and the  pair $\vec{\lambda}=(\la^L,\la^R)$
in sum runs over all indices  of cell modules of $B_{r,s}(\delta)$ and the  structure constant
$$C_{\vec{\nu}_1|\vec{\nu}_2}^{\vec{\lambda}}=\sum_{\vec{r}}
\Big(\prod_{i=1,2} C^{\nu_i^L}_{\la_{L}^{i}\,|\,\mu_{i}}\,\,\cdot\,\,
C^{\nu_{ i+(-1)^{i+1}}^R}_{\la_{R}^{i}\,|\,\mu_{i}}\Big),$$
with the indices \[
\vec{r}=\left\{
\begin{array}{ll}
\la_{L}^{1}\vdash ( r_1-T_{AD}-T_{AC}), &
\mu_{1}\vdash T_{AC}, \\
\la_{R}^{2}\vdash ( s_1-T_{AD}-T_{BD}),&\\
\la_{L}^{2}\vdash ( r_2-T_{BC}-T_{BD}),&  \mu_{2}\vdash T_{BD},\\
\la_{2}^{R}\vdash ( s_2-T_{BC}-T_{AC}).&
\end{array}
\right.
\]
running over all  $(T_{AC},T_{BD},T_{AD},T_{BC})\in I_{(r_1|s_1,r_2|s_2)}^l$ such that
$$  l_1=T_{AD}
\mbox{~and~}l_2=T_{BC}.$$
\end{thm}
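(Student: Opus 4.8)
The plan is to convert the induction product into a branching multiplicity by Frobenius reciprocity and then to feed the branching rule of Theorem~\ref{keyp} into the count. Throughout, all the algebras in sight are semisimple, so every cell module is simple, $\ind$ and $\res$ are exact, $\ind$ is left adjoint to $\res$, and for simple modules composition multiplicity equals the dimension of the corresponding $\Hom$-space. Hence the coefficient of $[\Delta_{r,s}(\la^L,\la^R)]$ in $\big[\ind_{B_{r_1,s_1}\o B_{r_2,s_2}}^{B_{r,s}}\big(\Delta_{r_1,s_1}(\nu_1^L,\nu_1^R)\o\Delta_{r_2,s_2}(\nu_2^L,\nu_2^R)\big)\big]$ equals
$$C_{\vec\nu_1|\vec\nu_2}^{\vec\lambda}=\Big[\res_{B_{r_1,s_1}\o B_{r_2,s_2}}^{B_{r,s}}\big(\Delta_{r,s}(\la^L,\la^R)\big)\,:\,\Delta_{r_1,s_1}(\nu_1^L,\nu_1^R)\o\Delta_{r_2,s_2}(\nu_2^L,\nu_2^R)\Big],$$
and it remains to count this composition multiplicity using the composition series of the restricted module furnished by Theorem~\ref{keyp}.

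Next I would extract the contribution of each layer. By Theorem~\ref{keyp} the composition factors of that series are the cell modules of $B_{r_1,s_1}\o B_{r_2,s_2}$ listed in part~(2), indexed by a tuple $\textbf{t}=(T_{AC},T_{BD},T_{AD},T_{BC})\in I_{(r_1|s_1,r_2|s_2)}^{l}$ and by auxiliary partitions $\la_L^1,\mu_1,\la_R^2,\la_L^2,\mu_2,\la_R^1$ of the prescribed sizes. The ``left'' factor $V_{r_1,s_1}^{T_{AD}}\o\big(\ind(S^{\la_L^1}\o S^{\mu_1})\boxtimes\ind(S^{\la_R^2}\o S^{\mu_2})\big)$ decomposes, via $\ind(S^{\al}\o S^{\mu})\cong\bigoplus_{\ga}(S^{\ga})^{\oplus C^{\ga}_{\al|\mu}}$ over the symmetric group together with the identification $V_{r_1,s_1}^{l_1}\o(S^{\ga}\boxtimes S^{\be})\cong\Delta_{r_1,s_1}(\ga,\be)$, into cell modules all having exactly $T_{AD}$ arcs; so it contains $\Delta_{r_1,s_1}(\nu_1^L,\nu_1^R)$ exactly when $T_{AD}=l_1$, and then with multiplicity $C^{\nu_1^L}_{\la_L^1|\mu_1}\cdot C^{\nu_1^R}_{\la_R^2|\mu_2}$; Proposition~\ref{aalc} identifies this multiplicity space with a $\Hom$-space over $k\Sigma_{r_1-l_1,\,s_1-l_1}$. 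Symmetrically, the ``right'' factor contains $\Delta_{r_2,s_2}(\nu_2^L,\nu_2^R)$ exactly when $T_{BC}=l_2$, with multiplicity $C^{\nu_2^L}_{\la_L^2|\mu_2}\cdot C^{\nu_2^R}_{\la_R^1|\mu_1}$.

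Assembling the count, only the tuples $\textbf{t}$ with $T_{AD}=l_1$ and $T_{BC}=l_2$ contribute a copy of $\Delta_{r_1,s_1}(\nu_1^L,\nu_1^R)\o\Delta_{r_2,s_2}(\nu_2^L,\nu_2^R)$; for each such $\textbf{t}$ the auxiliary partitions run precisely over the index set $\vec r$ of the theorem, and summing the products of the two layer contributions gives
$$C_{\vec\nu_1|\vec\nu_2}^{\vec\lambda}=\sum_{\vec r}\prod_{i=1,2}C^{\nu_i^L}_{\la_L^i|\mu_i}\cdot C^{\nu_{i+(-1)^{i+1}}^R}_{\la_R^i|\mu_i},$$
as claimed. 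One must still record the multiplicity with which each layer itself occurs in the composition series of $(\ast)$; this is controlled by the Littlewood--Richardson decompositions of $\res(S^{\la^L})$ and $\res(S^{\la^R})$ onto the blocks $A,B$ and $C,D$ used in the general case of Theorem~\ref{keyp}, and one checks that these are exactly accounted for by letting $\la_L^1,\la_L^2$ (respectively $\la_R^1,\la_R^2$) range over all partitions of the sizes appearing in $\vec r$.

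I expect the main obstacle to be this bookkeeping rather than any single difficult step. Three points require care: (i) that only the layers with $T_{AD}=l_1$ and $T_{BC}=l_2$ can produce the prescribed cell module, which rests on $V_{r_1,s_1}^{T_{AD}}$ and $V_{r_2,s_2}^{T_{BC}}$ being partial diagram modules of a \emph{fixed} arc number, the ``cap number'' being a layer invariant as in Lemma~\ref{capnever}; (ii) matching each Specht label to the correct block among $A,B,C,D$, so that the index swap $i\mapsto i+(-1)^{i+1}$ in the exponent of the $C^{\nu^R}$ factor comes out right --- precisely where the twist in the twisted tensor product of Theorem~\ref{iso} manifests; and (iii) fusing, within a single block, the new free dots (carrying $S^{\mu_i}$) with the old free dots (carrying $S^{\la_L^i}$ or $S^{\la_R^i}$) through the induction product of Specht modules, as in the general case of Theorem~\ref{keyp}. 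The remaining steps are routine additivity of multiplicities over the short exact sequences of $(\ast)$ together with semisimplicity.
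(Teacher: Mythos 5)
Your proposal follows essentially the same route as the paper: convert the structure constant to $\dim\Hom\big(\Delta_{r_1,s_1}(\nu_1^L,\nu_1^R)\otimes\Delta_{r_2,s_2}(\nu_2^L,\nu_2^R),\ \res\Delta_{r,s}(\la^L,\la^R)\big)$ by adjunction, run through the filtration of Theorem \ref{keyp}(2), reduce to the symmetric-group level via Proposition \ref{aalc} (which forces $T_{AD}=l_1$ and $T_{BC}=l_2$), and extract Littlewood--Richardson coefficients; your decomposing $\ind(S^{\al}\o S^{\mu})$ directly is equivalent to the paper's second application of Frobenius reciprocity. The layer-multiplicity bookkeeping you flag at the end is likewise left implicit in the paper's own argument, so your treatment is, if anything, slightly more explicit about where care is needed.
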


\begin{proof}
Firstly, we have
\begin{eqnarray}
	[\Delta_{r_1,s_1}(\nu_1^L,\nu_1^R)]\cdot [\Delta_{r_2,s_2}(\nu_2^L,\nu_2^R)]	
&=& [\ind\big( \Delta_{r_1,s_1}(\nu_1^L,\nu_1^R)\,\otimes\, \Delta_{r_2,s_2}(\nu_2^L,\nu_2^R)\big)]\nonumber\\
\nonumber	\\
& = &\sum_{\vec{\lambda}}C_{\vec{\nu}_1|\vec{\nu}_2}^{\vec{\lambda}}[\Delta_{r,s}(\la^L,\la^R)], \nonumber
\end{eqnarray}
where
\begin{eqnarray*}
C_{\vec{\nu}_1|\vec{\nu}_2}^{\vec{\lambda}}	&:=& \dim \Hom \Big(\ind\big( \Delta_{r_1,s_1}(\nu_1^L,\nu_1^R)\,
\otimes\, \Delta_{r_2,s_2}(\nu_2^L,\nu_2^R)\big)\, , \,   \Delta_{r,s}(\la^L,\la^R)\Big)\nonumber\\
& = &\dim \Hom\Big(  \Delta_{r_1,s_1}(\nu_1^L,\nu_1^R)\,\otimes\, \Delta_{r_2,s_2}(\nu_2^L,\nu_2^R)\, , \,
\res\big(\Delta_{r,s}(\la^L,\la^R)\big)\Big).
\end{eqnarray*}

From Theorem \ref{keyp} (2), the restricted module $\res\big(\Delta_{r,s}(\la^L,\la^R)\big)$
is filtered by cell modules of the following form
	$$	
\begin{array}{ll}
V_{r_1, s_1}^{T_{AD}}\o \Big (~ \ind(\,S^{\la_{L}^1}\o S^{\mu_{1}}\,) \,
\boxtimes \, \ind(\,S^{\la_{R}^2}\o S^{\mu_{2}}\,)~\Big ) & \text{(left)} 		 \vspace{0.1cm} \\	\hspace{5cm}
\bigotimes\\
\vspace{0.8cm} V_{r_2, s_2}^{T_{BC}}\o \Big (~\ind(\,S^{\la_{L}^{2}}\o S^{\mu_{2}}\,) \,
\boxtimes \, \ind(\,S^{\la_{R}^{1}}\o S^{\mu_{1}}\,) ~\Big )& \text{(right)}
\end{array},$$
with $(T_{AC},T_{BD},T_{AD},T_{BC})\in I_{(r_1|s_1,r_2|s_2)}^l$ and the indices \[
\left\{
\begin{array}{ll}
\la_{L}^{1}\vdash ( r_1-T_{AD}-T_{AC}), &
\mu_{1}\vdash T_{AC}, \\
\la_{R}^{2}\vdash ( s_1-T_{AD}-T_{BD}),&\\
\la_{L}^{2}\vdash ( r_2-T_{BC}-T_{BD}),&  \mu_{2}\vdash T_{BD},\\
\la_{2}^{R}\vdash ( s_2-T_{BC}-T_{AC}).&
\end{array}
\right.
\]
Note that for $i=1,2$, $\Delta_{r_i,s_i}(\nu_i^L,\nu_i^R)=V_{r_i,s_i}^{l_i}\o (S^{\nu_i^L}\,
\boxtimes\, S^{\nu_i^R})$, and simply write $T_{1}=T_{AD}, T_{2}=T_{BC}$.
Thanks to Proposition \ref{aalc}, we get
$$ \Hom\bigg( V_{r_i,s_i}^{l_i}\o (S^{\nu_i^L}\,
\boxtimes\, S^{\nu_i^R})\, , \, V_{r_i, s_i}^{T_{i}}\o \Big (~ \ind(\,S^{\la_{L}^{i}}\boxtimes S^{\mu_{i}}\,) \, \boxtimes \, \ind(\,S^{\la_{R}^{i+(-1)^{i+1}}}\boxtimes S^{\mu_{i+(-1)^{i+1}}}\,)~\Big) \bigg)$$
\[
=\left\{
\begin{array}{ll}
\Hom\big(  S^{\nu_i^L}\,\boxtimes\, S^{\nu_i^R} \, , \, \ind(\,S^{\la_{L}^{i}}
\boxtimes S^{\mu_{i}}\,) \, \boxtimes \, \ind(\,S^{\la_{R}^{i+(-1)^{i+1}}}
\boxtimes S^{\mu_{i+(-1)^{i+1}}}\, \big),\, (\blacklozenge) &\\
\hspace{23em} \mbox{if~$ l_i=T_{i}$, for $i=1,2;\,$}\\
0,\hspace{22em}\mbox{otherwise.}&
\end{array}
\right.
\]
By Frobenius reciprocity, $(\blacklozenge)$ is equal to
$$\Hom\big(\, \res (S^{\nu_i^L}) ,  S^{\la_{L}^{i}}\boxtimes S^{\mu_{i}}\big)\o
\Hom\big(\, \res (S^{\nu_i^R} \,),    S^{\la_{R}^{i+(-1)^{i+1}}}\boxtimes S^{\mu_{i+(-1)^{i+1}}}\,\big)$$
and the dimension of the factors is the  Littlewood-Richardson coefficient:
$$C^{\nu_1^L}_{\la_{L}^{1}\,|\,\mu_{1}}\,\,,\,\, C^{\nu_1^R}_{\la_{R}^{2}\,|\,\mu_{2}}\,
\mbox{ for}~ i=1; \, C^{\nu_2^L}_{\la_{L}^{2}\,|\,\mu_{2}}\,\,,\,\, C^{\nu_2^R}_{\la_{R}^{1}\,|\,\mu_{1}}\, \mbox{ for}~i=2.$$
Therefore, we get the dimension of previous Hom-space as follows:
\begin{eqnarray*}
	\left\{
	\begin{array}{ll}
		\prod\limits_{i=1,2} C^{\nu_i^L}_{\la_{L}^{i}\,|\,\mu_{i}}\,\,\cdot\,\,
C^{\nu_{ i+(-1)^{i+1}}^R}_{\la_{R}^{i}\,|\,\mu_{i}},\, & \mbox{if~} l_1=T_{AD}
		\mbox{~and~}l_2=T_{BC},\\
		0 &\mbox{otherwise.}
	\end{array}
	\right.
\end{eqnarray*}

Consequently, the conclusion follows when index $(T_{AC},T_{BD},T_{AD},T_{BC})$
runs over all  $ I_{(r_1|s_1,r_2|s_2)}^l$ such that
$  l_1=T_{AD}
\mbox{~and~}l_2=T_{BC}.$
\end{proof}

\bigskip

\vspace{1cm}

\end{document}